\documentclass[final]{siamltex}

\usepackage{amsbsy}
\usepackage{amsmath}
\usepackage{amssymb}
\usepackage{latexsym}
\usepackage{ifthen}
\usepackage{subfigure}
\usepackage{turnstile}
\usepackage{mathtools}
\usepackage{booktabs}
\usepackage{balance}
\usepackage{paralist}
\usepackage{cite}
\usepackage{url}
\usepackage[multiple]{footmisc}

\newtheorem{assumption}{A.}
\newtheorem{prop}{Proposition}
\newtheorem{cor}{Corollary}
\newtheorem{example}{Example}

\newcommand{\ones}{{\bf{1}}}

\newcommand{\naturals}{\mathbb{N}}
\newcommand{\prob}{\mathbb{P}}

\mathtoolsset{showonlyrefs}

\title{Robustness Properties in Fictitious-Play-Type Algorithms
\thanks{The work was partially supported by the FCT projects FCT [UID/EEA/5009/2013] and FCT [UID/EEA/50009/2013] through the Carnegie Mellon/Portugal Program managed by ICTI from FCT and by FCT Grant CMU-PT/SIA/0026/2009 and was partially supported by NSF grant CCF 1513936.
}
}

\author{Brian Swenson\footnotemark[2]\ \footnotemark[3]\, Soummya Kar\footnotemark[2]\, Jo\~{a}o Xavier\footnotemark[3]\, David S. Leslie\footnotemark[4]}

\begin{document}
\maketitle

\renewcommand{\thefootnote}{\fnsymbol{footnote}}

\footnotetext[2]{Department of Electrical and Computer Engineering,
Carnegie Mellon University, Pittsburgh, PA 15213, USA (soummyak@andrew.cmu.edu).}
\footnotetext[3]{Institute for Systems and Robotics (ISR/IST), LARSyS, Instituto Superior T\'{e}cnico, University of Lisbon (jxavier@isr.ist.utl.pt).}
\footnotetext[4]{Department of Mathematics and Statistics, Lancaster University, Lancaster, LA1 4YF, United Kingdom (d.leslie@lancaster.ac.uk)}

\renewcommand{\thefootnote}{\arabic{footnote}}

\begin{abstract}
  Fictitious play (FP) is a canonical game-theoretic learning algorithm which has been deployed extensively in decentralized control scenarios.  However standard treatments of FP, and of many other game-theoretic models, assume rather idealistic conditions which rarely hold in realistic control scenarios.  This paper considers a broad class of best response learning algorithms, that we refer to as {\em FP-type} algorithms.  In such an algorithm, given some (possibly limited) information about the history of actions, each individual forecasts the future play and chooses a (myopic) best action given their forecast. We provide a unified analysis of the behavior of FP-type algorithms under an important class of perturbations, thus demonstrating robustness to deviations from the idealistic operating conditions that have been previously assumed.  This robustness result is then used to derive convergence results for two control-relevant relaxations of standard game-theoretic applications: distributed (network-based) implementation without full observability
  and asynchronous deployment (including in continuous time).  In each case the results follow as a direct consequence of the main robustness result.
\end{abstract}

\begin{keywords}
  Game Theory, Learning, Multi-agent, Distributed
\end{keywords}

\begin{AMS}
93A14, 93A15, 91A06, 91A26, 91A80
\end{AMS}

\pagestyle{myheadings}
\thispagestyle{plain}
\markboth{}{}

\section{Introduction}
Decentralized control scenarios are naturally modeled using the framework of game theory
\cite{marden2012game}. In this context, solution concepts such as Nash or correlated equilibrium can represent desirable operating conditions for the system.
A game-theoretic learning algorithm is a distributed procedure that allows a group of agents to cooperatively learn and coordinate their actions on such equilibria.

Fictitious Play (FP) \cite{Brown51} is a canonical game-theoretic learning algorithm---the FP algorithm, as well as variants thereof, have been studied in a wide range of control and optimization settings \cite{Shamma03,marden06,Lambert01,garcia2000fictitious
,lambert2003fictitious,tembine2012distributed,gass1995modified,
saad2012game,benaim2010class}.
In FP, each player tracks the empirical frequency of the actions of every other player and uses this information as a (possibly incorrect) forecast of the future behavior of game play. In particular, each player chooses their next-stage action as a myopic best response to their forecast.

FP is known to converge to Nash equilibrium (NE) in various classes of games \cite{robinson1951iterative,miyasawa1961convergence,berger2005fictitious,Mond01,Mond96,sela1999fictitious,berger2008learning},
but is known to not always do so \cite{shapley1964,jordan1993}.  Recently there have been efforts to determine the robustness of game-theoretic approaches to control, even in the absence of convergence to equilibrium \cite{ostrovski2013payoff, fudenberg1995consistency, alpcan2010network}.

However, standard treatments of FP (as well as many other learning algorithms) assume that rather idealistic conditions hold \cite{bravo2015reinforcement}. For example, in the traditional treatment of FP players are assumed to act in perfect synchrony, be capable of perfectly computing the best response in each stage, and are assumed to have instantaneous access to all information required to compute the best response.
Such assumptions are often extremely impractical---particularly, in large-scale distributed settings. This motivates the study of the robustness of learning results to perturbations occurring in practical real-world scenarios.

The paper studies the robustness of a class of \emph{FP-type} algorithms in which players are assumed to track some statistics related to the history of the game (not necessarily the empirical frequency distribution of classical FP) and form a forecast of opponent behavior using this information. As in FP, each player chooses their next-stage action as a myopic best response given their forecast.

Our main theoretical result is to show that FP-type algorithms are robust in the presence of a certain important class of perturbations. In particular, suppose that the myopic best response is perturbed so that players may sometimes choose suboptimal actions, but that the degree of suboptimality decays to zero over time.
(In the spirit of \cite{van2000weakened}, we sometimes call a FP-type process that is perturbed in this manner a \emph{weakened FP-type process}.)
We show that the fundamental learning property of a FP-type algorithm is retained in the presence of such a perturbation. In the case of classical FP, this means that convergence to NE is preserved. More generally, if a FP-type algorithm converges to some equilibrium set in the absence of perturbations, then our result can be applied to study convergence to the same equilibrium set in the presence of perturbations.

Robustness results of this kind were first studied in \cite{leslie2006generalised} for the case of classical FP. The present paper extends the approach of \cite{leslie2006generalised} to demonstrate robustness of FP-type algorithms. This greatly enhances the applicability of  game-theoretical learning theory to real-world control problems. Moreover, the results of this paper have required the development of useful new technical tools. For example, Lemma \ref{lemma_BR_epsilon_to_delta} studies $\epsilon$-best response sequences and demonstrates that such sequences may in fact be considered in terms of a more amenable sequence of so called $\delta$-perturbations (see Section \ref{sec_CTS}).
In order to demonstrate how the result can be applied to real-world control problems, we consider two example applications to control scenarios.

As a first application, we study the problem of implementing a FP-type algorithm in a distributed setting. In traditional implementations of FP-type algorithms it is assumed that players have instantaneous access to the information required to generate their forecast. However, in practical scenarios this information may often be distributed among the agents and must be disseminated using an overlaid communication graph. We present a generic method for implementing a FP-type algorithm in this setting, and we show that convergence of such an algorithm can be ensured as a consequence of the robustness result.

Distributed implementations of FP were previously studied in \cite{swenson2012ECFP}---the robustness result of this paper significantly expands the class of distributed communication protocols that can be used and extends the results to the class of FP-type algorithms. In particular, \cite{swenson2012ECFP} requires that any errors in the system decay at some minimum rate, whereas the robustness results in this paper do not require a minimum error decay rate.
In communication schemes with channel noise or random link failures, it may not be possible to achieve the error decay rates needed by \cite{swenson2012ECFP}. These important practical scenarios can, however, be handled by the methods developed in this paper.

As a second application, we consider the problem of asynchronous implementation. In many game-theoretic learning algorithms, it is assumed that players act in a perfectly synchronous manner. This assumption is unrealistic in large-scale distributed scenarios where players do not have access to a global clock. We study a practical variant of FP where players are permitted to choose actions in an asynchronous manner, and derive a mild condition under which convergence can be shown to occur.\footnote{We remark that while these applications are interesting in and of themselves, additional utility may be gained by considering them in conjunction with one another.
For example, the first application allows for \emph{synchronous} distributed implementation and the second allows for generic asynchronous implementation. Together, they allow one to study \emph{asynchronous} distributed implementation of a FP-type algorithm, using, for example, asynchronous gossip \cite{dimakis2010gossip} as a means of disseminating information amongst agents.} The proofs of these results follow as a simple consequence of the robustness result, and do not require the use of additional stochastic approximation techniques.

Applications of the robustness result are by no means limited to those presented here. For example, the companion work \cite{swenson2015CESFP} utilizes the robustness result to develop a Monte-Carlo based method that significantly mitigates computational burden of FP, and \cite{swenson2014strong} use the robustness result to develop a variant of FP that achieves convergence in \emph{strategic intentions} \cite{young2004strategic}.

The selected applications are intended to serve as a sample of the manner in which the robustness result can be applied.
Each of these applications has been studied in a variety of contexts, e.g., \cite{olshevsky2009convergence,koshal2012gossip,swenson2012ECFP,Lambert01,swenson2015CESFP,
borkar1997stochastic,perkins2012asynchronous,Fud92}. In this paper, we demonstrate how they can be treated in a unified manner and demonstrate how the robustness result can advance the state of the art in each.

The remainder of the paper is organized as follows. Section \ref{sec_prelims} sets up the notation.
Section \ref{sec_CTS} sets up the mathematical tools to be used in the proof of the main theoretical result. Section \ref{sec_FP-type} presents the notion of a FP-type algorithm, and presents our robustness result. Section \ref{sec_ecfp} presents an example FP-type process in the context of the robustness result.
Section \ref{sec_distributed_fp} studies distributed implementation
and Section \ref{sec_async_implementation} studies asynchronous implementation.
Finally, Section \ref{sec_conclusion} concludes the paper.

\section{Preliminaries}
\label{sec_prelims}
A game in normal form is represented by the tuple $\Gamma := (\mathcal{N},(Y_i,u_i)_{i\in N})$, where $\mathcal{N} = \{1,\ldots,N\}$ denotes the set of players, $Y_i$ denotes the finite set of actions available to player $i$, and $u_i:\prod_{i\in N}Y_i \rightarrow \mathbb{R}$ denotes the utility function of player $i$. Denote by $Y:= \prod_{i\in N} Y_i$ the joint action space.

For a finite set $X$, let $\Delta(X)$ denote the set of probability distributions over $X$. In particular, let $\Delta(Y_i)$ be the set of \emph{mixed} strategies available to player $i$, let $\Delta(Y_{-i})$ be the set of \emph{mixed} strategies (possibly correlated) available to all players other than $i$, and let $\Delta(Y)$ denote the set of joint mixed strategies (possibly correlated) available to all players.

In large scale distributed settings it is often convenient to study mixed strategies where players act independently.
Denote by $\Delta^N := \prod_{i\in N} \Delta(Y_i)$ the set of (independent) joint mixed strategies. That is, a strategy\footnote{As a matter of convention, we use the letters $p$ and $q$ when referring to strategies in $\Delta^N$ throughout the paper.} $p = (p_1,\ldots,p_N) \in \Delta^N$---where $p_i$ denotes the marginal strategy of player $i$---may be represented in the space $\Delta(Y)$ as the product $\prod_{i=1}^N p_i \in \Delta(Y)$. In this context we define $\Delta_{-i} := \prod_{j\not= i} \Delta_j$ to be the set of (independent) mixed strategies of players other than $i$. When convenient, we represent a mixed strategy $p\in \Delta^N$ by $p=(p_i,p_{-i})$, where $p_i \in \Delta_i$ denotes the marginal strategy of player $i$ and $p_{-i} = (p_1,\ldots,p_N)\backslash p_i \in \Delta_{-i}=\prod_{j\not= i} \Delta_j$ denotes the strategies of players other than $i$.

In the context of mixed strategies, we often wish to retain the notion of playing a single deterministic action. For this purpose, let $\ones_{y_i}$ denote the mixed strategy placing probability one on the action $y_i\in Y_i$.

For $x\in \Delta(Y)$, the expected utility  of player $i$ is given by
\begin{equation}
\label{def_mixed_U}
U_i(x) := \sum_{y \in Y} u_i(y) x(y_1,\ldots,y_n),
\end{equation}
\noindent and for $p\in \Delta^N$, the expected utility of player $i$ is given by \newline $U_i(p) := \sum_{y \in Y} u_i(y) p_1(y_1)\ldots p_N(y_N).$

Given a strategy $x_{-i} \in \Delta(Y_{-i})$, define the best response set for player $i$ by $
BR_i(x_{-i}) := \arg\max_{x_i\in\Delta(Y_i)} U_i(x_i,x_{-i}),
$
and more generally, the $\epsilon$-best-response set is given by
\begin{align} \label{def_epsilon_BR}
BR_{i,\epsilon}(x_{-i}) := \{\tilde x_{i} & \in \Delta(Y_i):~ U_i(\tilde x_i,x_{-i}) \geq \max_{x_i \in \Delta(Y_i)} U_i(x_i,x_{-i}) - \epsilon\}.
\end{align}
To keep notation simple, we sometimes employ the following abuses. The notation $y_i\in BR_{i,\epsilon}(x_{-i})$ means that $\ones_{y_i} \in BR_{i,\epsilon}(x_{-i})$. Similarly, for $y_i\in Y_i$ the notation $U_i(y_i,x_{-i})$ refers to the expected utility $U_i(\ones_{y_i},x_{-i})$.

The set of Nash equilibria is given by
\newline $NE := \{p\in \Delta^N: U_i(p_i,p_{-i}) \geq U_i( p_i',p_{-i}), ~\forall p_i' \in \Delta(Y_i),~\forall i\in \mathcal{N}\}$.

The distance between a point $x \in \mathbb{R}^m$ and a set $S \subset \mathbb{R}^m$ is given by $d(x,S) = \inf \{ \| x - x' \| : x'\in S\}$. Throughout the paper $\|\cdot\|$ denotes the $\mathcal{L}_{2}$ Euclidean norm unless otherwise specified.
We let $\mathbb{N} := \{0,1,2,\ldots\}$ denote the non-negative integers, and $\mathbb{N}_{+} := \{1,2,\ldots\}$ denote the positive integers.

Throughout, we assume the existence of probability spaces rich enough to carry out the construction of the various random variables required.  As a matter of convention, all equalities, inequalities, and set inclusions involving random quantities are interpreted almost surely (a.s.) with respect to the underlying probability measure, unless otherwise stated.

\subsection{Repeated Play} \label{sec_repeated_play}
Unless otherwise stated, the learning algorithms considered in this paper all assume the following format of repeated play \cite{young2004strategic,fudenberg1998theory}. Let a normal form game $\Gamma$ be fixed. Let players repeatedly face off in the game $\Gamma$, and for $n\in\{1,2,\ldots\}$, let $\sigma_i(n)\in \Delta(Y_i)$ denote the strategy used by player $i$ in round $n$. Let the $N$-tuple $\sigma(n) = (\sigma_1(n),\ldots,\sigma_N(n)) \in \Delta^N$ denote the joint strategy at time $n$.

\section{Difference Inclusions and Differential Inclusions}
\label{sec_CTS}
In this section we introduce the mathematical tools necessary to prove our main theoretical result.

In particular, in Section \ref{sec_FP-type} we will study the limiting behavior of a (discrete-time) FP-type process by first studying the behavior of a continuous-time analog and then relating the limit sets of the the (discrete-time) FP-type process to the limit sets of its continuous-time counterpart.

Following the approach of \cite{benaim2005stochastic}, let $F:\mathbb{R}^m\rightrightarrows \mathbb{R}^m$ denote a set-valued function mapping each point $\xi\in \mathbb{R}^m$ to a set $F(\xi) \subseteq \mathbb{R}^m$. We assume:
\begin{assumption}
\label{a_F}
(i) $F$ is a closed set-valued map.\footnote{I.e., $\mbox{Graph}(F) := \{(\xi,\eta):\eta\in F(\xi) \} $ is a closed subset of $\mathbb{R}^m \times \mathbb{R}^m$.} \\
(ii) $F(\xi)$ is a nonempty compact convex subset of $\mathbb{R}^m$ for all $\xi\in \mathbb{R}^m$.\\
(iii) For some norm $\|\cdot\|$ on $\mathbb{R}^m$, there exists $c>0$ such that for all $\xi\in \mathbb{R}^m$, $\sup_{\eta\in F(\xi)}\|\eta\| \leq c(1+\|\xi\|).$
\end{assumption}

\begin{definition}\label{def_DI_soln}
A solution for the differential inclusion $\frac{dx}{dt} \in F(x)$
with initial point $\xi \in \mathbb{R}^m$ is an absolutely continuous mapping $x:\mathbb{R}\rightarrow\mathbb{R}^m$ such that $x(0) = \xi$ and $\frac{dx(t)}{dt} \in F(x(t))$ for almost every $t\in \mathbb{R}$.
\end{definition}

In order to study the asymptotic behavior of discrete-time processes in this context, one may study the continuous-time interpolation. Formally, we define the continuous-time interpolation as follows:
\begin{definition}
Consider the discrete-time process
\begin{equation}
x(n+1) - x(n) \in \gamma(n+1) F(x(n)).
\end{equation}
Set $\tau_0 = 0 \mbox{ and } \tau_n = \sum_{i=1}^n \gamma(i)$ for $n\geq 1$ and define the continuous-time interpolated process $w:[0,\infty) \rightarrow \mathbb{R}^m$ by
\begin{equation}
w(\tau_n + s) = x(n) + s\frac{x(n+1) - x(n)}{\tau_{n+1} - \tau_{n}}, ~s\in [0,\gamma(n+1)).
\end{equation}
\end{definition}
In general, the continuous-time interpolation of a discrete-time process will not itself be a precise solution for the differential inclusion as stated in Definition \ref{def_DI_soln}. However, the interpolated process may be shown to satisfy the more relaxed solution concept---namely, that of a \emph{perturbed solution} to the differential inclusion. We first define the notion of a $\delta$-perturbation which we then use to define the notion of a perturbed solution.

\begin{definition}\label{def_delta_perturbation}
Let $F:\mathbb{R}^m \rightrightarrows \mathbb{R}^m$ be a set-valued map, and let $\delta > 0$. The $\delta$-perturbation of $F$ is given by
{\small
$$ F^{\delta}(x) := \{y\in \mathbb{R}^m: \exists z\in \mathbb{R}^m \mbox{ s.t. } \|z-x\| < \delta, ~d(y,F(z)) < \delta\}.$$
}
\end{definition}

\begin{definition}
A continuous function $y:[0,\infty)\rightarrow \mathbb{R}^m$ will be called a \emph{perturbed solution} to $F$ if it satisfies the following set of conditions:\\
(i) $y$ is absolutely continuous. \\
(ii) $\frac{dy(t)}{dt} \in F^{\delta(t)}(y(t))$ for almost every $t>0$, for some function $\delta:[0,\infty) \rightarrow \mathbb{R}$ with $\delta(t) \rightarrow 0$ as $t\rightarrow\infty$.
\end{definition}

The following proposition gives sufficient conditions under which an interpolated process will in fact be a perturbed solution.
\begin{prop}\label{prop_delta_perturbation}
Consider a discrete-time process $\{x(n)\}_{n\geq 1}$ such that
\newline $\gamma(n)^{-1}\left(x(n+1) - x(n) \right) \in F^{\delta_n}(x(n))$
where $\{\gamma(n)\}_{n\geq 1}$ is a sequence of positive numbers such that $\gamma(n) \rightarrow 0$ and $\sum_{n=1}^{\infty} \gamma(n) = \infty$,
$\{\delta_n\}_{n\geq 1}$ is a sequence of non-negative numbers converging to 0, and $\sup_n \|x(n)\| < \infty$.
Then the continuous-time interpolation of $\{x(n)\}_{n\geq 1}$ is a perturbed solution of $F$.
\end{prop}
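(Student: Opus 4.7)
The plan is to follow the standard continuous-time interpolation argument of Benaïm--Hofbauer--Sorin, adapting it to the $\delta$-perturbation set-up of Definition \ref{def_delta_perturbation}. There are essentially two things to verify about $w$: absolute continuity, which is easy, and the inclusion $\dot w(t)\in F^{\delta(t)}(w(t))$ almost everywhere with $\delta(t)\to 0$, which is the content of the proposition. The main subtlety is that the hypothesis gives us a $\delta_{n+1}$-inflation of $F$ centered at $x(n)$, whereas we must express the derivative as a $\delta(t)$-inflation centered at $w(t)$; the bridging estimate is that $w(t)$ is close to $x(n)$ on the interval $[\tau_n,\tau_{n+1})$.

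First I would bound the increments of $x(n)$. If $v_n := \gamma(n+1)^{-1}(x(n+1)-x(n))\in F^{\delta_{n+1}}(x(n))$, then by Definition \ref{def_delta_perturbation} there exists $z_n$ with $\|z_n-x(n)\|<\delta_{n+1}$ and some $\eta_n\in F(z_n)$ with $\|v_n-\eta_n\|<\delta_{n+1}$. By Assumption \ref{a_F}(iii), $\|\eta_n\|\le c(1+\|z_n\|)\le c(1+\|x(n)\|+\delta_{n+1})$. Combined with the hypotheses $\sup_n\|x(n)\|<\infty$ and $\delta_n\to 0$, this yields a uniform bound $\|v_n\|\le K$ for some constant $K<\infty$, and hence $\|x(n+1)-x(n)\|\le K\gamma(n+1)$.

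Next I would handle absolute continuity. Since $w$ is piecewise linear on each interval $[\tau_n,\tau_{n+1})$ with slope $v_n$, it is locally Lipschitz (with uniform Lipschitz constant $K$ from the previous step), and therefore absolutely continuous on every bounded interval of $[0,\infty)$, giving (i). The derivative satisfies $\dot w(t)=v_n$ for almost every $t\in[\tau_n,\tau_{n+1})$.

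Finally, I would relate the perturbation centered at $x(n)$ to one centered at $w(t)$. For $t\in[\tau_n,\tau_{n+1})$, using the linear interpolation formula,
\begin{equation}
\|w(t)-x(n)\|\le \|x(n+1)-x(n)\|\le K\gamma(n+1).
\end{equation}
Take the witness $z_n$ from step one. Then
\begin{equation}
\|z_n-w(t)\|\le \|z_n-x(n)\|+\|x(n)-w(t)\|<\delta_{n+1}+K\gamma(n+1),
\end{equation}
and $d(v_n,F(z_n))\le \|v_n-\eta_n\|<\delta_{n+1}$. Setting $\delta(t):=\delta_{n+1}+K\gamma(n+1)$ for $t\in[\tau_n,\tau_{n+1})$, the same witness $z_n$ shows that $\dot w(t)=v_n\in F^{\delta(t)}(w(t))$. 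Since $\delta_n\to 0$ and $\gamma(n)\to 0$, we have $\delta(t)\to 0$ as $t\to\infty$ (note $\tau_n\to\infty$ by $\sum\gamma(n)=\infty$), which establishes (ii). The only place care is required is the double appearance of $\delta_{n+1}$ and $\gamma(n+1)$ in bounding both the distance from the witness to $w(t)$ and the distance from $v_n$ to $F(z_n)$; taking the maximum of these terms as $\delta(t)$ is what makes the perturbation decay cleanly.
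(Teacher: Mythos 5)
Your proof is correct and follows exactly the route the paper intends: the paper itself only defers to Proposition~1.3 of Bena\"{i}m--Hofbauer--Sorin, and your argument is the standard interpolation proof of that result adapted to the $\delta$-perturbation definition, with the key bridging step (bounding $\|w(t)-x(n)\|$ via the linear-growth condition in A.~\ref{a_F}(iii) so that the witness $z_n$ for $x(n)$ also serves as a witness for $w(t)$) handled properly. The only cosmetic slip is that your closing sentence says ``taking the maximum'' of the two error terms while the displayed $\delta(t)$ is their sum; either choice works.
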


The proof of Proposition \ref{prop_delta_perturbation} follows similar reasoning to the proof of Proposition 1.3 in \cite{benaim2005stochastic}.

Our end goal is to characterize the set of limit points of the discrete-time process $\{x(n)\}_{n\geq 1}$ by characterizing the set of limit points of its continuous-time interpolation. With that end in mind, it is useful to consider the notion of a \emph{chain-recurrent set}---a set of natural limit points for perturbed processes.

\begin{definition}
\label{def_CTS}
Let $\|\cdot\|$ be a norm on $\mathbb{R}^m$, and let $F:\mathbb{R}^m\rightrightarrows \mathbb{R}^m$ be a set valued map satisfying A. \ref{a_F}.
Consider the differential inclusion
\begin{equation}
\frac{dx}{dt} \in F(x).
\label{CTS_diff_incl}
\end{equation}

(a) Given a set $X\subset\mathbb{R}^m$ and points $\xi$ and $\eta$, we write $\xi\hookrightarrow \eta$ if for every $\epsilon >0$ and $T>0$ there exist an integer $n^*\geq 1$, solutions $x_1,\ldots,x_{n^*}$ to the differential inclusion \eqref{CTS_diff_incl}, and real numbers $t_1,\ldots,t_{n^*}$ greater than $T$ such that\\
(i) $x_i(s)\in X$, for all $0\leq s \leq t_i$ and for all $i=1,\ldots,n^*$,\\
(ii) $\|x_i(t_i) - x_{i+1}(0)\| \leq \epsilon$ for all $i=1,\ldots,n^*-1$,\\
(iii) $\|x_1(0) - \xi\| \leq \epsilon$ and $\|x_{n^*}(t_{n^*}) - \eta\|\leq \epsilon.$

(b) $X$ is said to be internally chain recurrent if $X$ is compact and $\xi \hookrightarrow \xi'$ for all $\xi,\xi'\in X$.
\end{definition}

The following theorem from \cite{benaim2005stochastic} allows one to relate the set of limit points of a perturbed solution of $F$ to the internally chain recurrent sets $F$.

\begin{theorem}[\cite{benaim2005stochastic}, Theorem 3.6]
\label{thrm_BPS_to_CTS}
Let $y$ be a bounded perturbed solution to $F$. Then the limit set of $y$, $L(y) = \bigcap_{t\geq 0} \overline{\{y(s):~s\geq t\}}$
is internally chain recurrent.
\end{theorem}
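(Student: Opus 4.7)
The plan is to prove this in three stages: a compactness reduction, a shadowing lemma, and a chain-construction argument. For compactness, boundedness of $y$ makes the family $\overline{\{y(s): s \geq t\}}$ a decreasing family of nonempty compact sets, so $L(y)$ is compact and nonempty; a standard argument also gives $d(y(t), L(y)) \to 0$. The shadowing lemma I would establish is the following: for every $T > 0$ and $\epsilon > 0$ there exists $t_0$ such that, for all $t \geq t_0$, there is a genuine solution $x$ of $\frac{dx}{dt} \in F(x)$ on $[0, T]$ with $x(0) = y(t)$ and $\sup_{s \in [0, T]} \|x(s) - y(t + s)\| < \epsilon$. The argument views $y(t + \cdot)$ as a perturbed $F$-solution with perturbation magnitude $\sup_{s \geq t} \delta(s) \to 0$; the linear growth condition A.~\ref{a_F}(iii) yields a uniform bound on $\|dy/dt\|$ over finite windows, enabling an Arzel\`a--Ascoli extraction, while the closed-graph property and convex-valuedness of $F$ (A.~\ref{a_F}(i,ii)) permit passage to the limit inside $F$.

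Next I would use the shadowing lemma to show $L(y)$ is invariant under the flow of $F$: given $\xi \in L(y)$ and $s_k \to \infty$ with $y(s_k) \to \xi$, applying shadowing on $[0, T]$ at each $s_k$ and extracting a convergent subsequence produces an $F$-solution from $\xi$ whose image lies in $L(y)$, since it is a uniform limit of true solutions whose values track tails of $y$. A compactness-and-connectedness argument on the set of $F$-solutions from a fixed initial point then upgrades this to the stronger statement that every $F$-solution starting in $L(y)$ remains in $L(y)$. For chain transitivity, given $\xi, \xi' \in L(y)$ and $\epsilon, T > 0$, I would choose a sequence of time pairs $(a_n, b_n)$ with $b_n - a_n \geq T$, $y(a_n) \to \xi$, $y(b_n) \to \xi'$, and $\sup_{s \geq a_n} \delta(s) \to 0$. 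Partitioning each $[a_n, b_n]$ into sub-windows of length between $T$ and $2T$, the shadowing lemma produces chains of true $F$-solutions tracking $y$ with junction errors of order $\epsilon$. A diagonal Arzel\`a--Ascoli extraction over $n$ yields a limiting chain whose segments start at points of $L(y)$, and by invariance these segments remain in $L(y)$.

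The main obstacle is this final step. Definition~\ref{def_CTS} insists each solution in the chain lie inside $X = L(y)$, whereas direct shadowing yields solutions starting from $y(a_j^{(n)})$, which merely approaches $L(y)$. The diagonal extraction together with invariance is what converts \emph{approximate} chains lying near $L(y)$ into \emph{exact} chains inside $L(y)$, and ensuring the junction error bound survives the passage to the limit---without inflating by the distance from $y(a_j^{(n)})$ to $L(y)$---is the technical heart of the argument and the place where the special structure of $\omega$-limit sets of perturbed solutions is doing the essential work.
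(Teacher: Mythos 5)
You should first note that the paper itself offers no proof of this statement: it is imported verbatim from \cite{benaim2005stochastic}, so the only available comparison is with the proof given there. Your first two stages track that proof's first half faithfully. The compactness facts are standard, and your shadowing lemma is exactly the assertion that a bounded perturbed solution is an asymptotic pseudotrajectory of the set-valued dynamical system induced by $F$ (Theorem 4.2 of \cite{benaim2005stochastic}), established with the ingredients you name: a uniform Lipschitz bound on finite windows from A.~\ref{a_F}(iii), an Arzel\`a--Ascoli extraction, and closedness of the solution set under uniform limits via A.~\ref{a_F}(i)--(ii). One caveat: pinning the shadowing solution's initial condition to exactly $y(t)$ is delicate, because solution sets of differential inclusions do not depend lower-semicontinuously on the initial point; but since Definition~\ref{def_CTS} tolerates an $\epsilon$-jump at the start of each chain segment, this is harmless for your purposes. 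The invariance of $L(y)$ also goes through as you describe.

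The genuine gap is the final step, and it is not merely technical. For each fixed $n$ your construction yields a finite chain of true solutions that $\epsilon$-track $y$ on $[a_n,b_n]$ and therefore lie within $O(\epsilon)$ of $L(y)$ --- whereas Definition~\ref{def_CTS}(a)(i) requires every $x_i(s)$ to lie \emph{in} $X=L(y)$. The proposed rescue, a diagonal extraction over $n$, does not close this: since $\xi$ and $\xi'$ are only visited along subsequences of times, the gaps $b_n-a_n$, and hence the number of segments in the $n$-th chain, are in general unbounded in $n$, so there is no fixed finite chain structure whose limit can be taken; and for fixed $n$ one cannot push the segments into $L(y)$ by perturbing their initial points onto $L(y)$, again because $\xi\mapsto S_\xi$ fails to be lower semicontinuous for inclusions. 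This is precisely why the proof in \cite{benaim2005stochastic} abandons the direct chain construction at this juncture: it instead characterizes internally chain transitive sets as the compact invariant sets whose \emph{restricted} dynamics admit no proper attractor, and then shows $L(y)$ is attractor-free --- if $A\subsetneq L(y)$ were a proper attractor for the dynamics restricted to $L(y)$, the pseudotrajectory property would force $y$ to be absorbed into a fundamental neighborhood of $A$ and hence $L(y)\subseteq A$, a contradiction. To complete your argument you would need either to import that attractor-free characterization or to supply a new mechanism for converting chains \emph{near} $L(y)$ into chains \emph{inside} $L(y)$; the step you flag as the ``technical heart'' is exactly the piece that is missing.
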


In order to eventually prove Theorem \ref{thrm_conv_to_CTS} (in the following section) we will show that the continuous-time interpolation of a FP-type process is in fact a bounded perturbed solution to the associated differential inclusion \eqref{FP_diff_inclusion}, and hence by Theorem \ref{thrm_BPS_to_CTS}, the limit points of the FP-type process are contained in the internally chain recurrent sets of the associated differential inclusion.

\section{Fictitious-Play-Type Process} \label{sec_FP-type}
In this section we will formally define the general framework of FP-type processes, and demonstrate how this encompasses several existing learning procedures.  We will then introduce the weakening of FP-type processes which allows consideration of robustness to perturbations, before proving general convergence properties of the framework.

We begin by reviewing the classical FP algorithm.

\subsection{Fictitious Play} \label{sec_FP}
Define the empirical history distribution (or empirical distribution) of player $i$ by
\begin{equation} \label{def_empirical_dist}
q_i(n) := \frac{1}{n}\sum_{s=1}^n \sigma_i(s),
\end{equation}
\noindent
where $\{\sigma_i(s)\}$ is a strategy sequence as defined in Section \ref{sec_repeated_play},
and let the joint empirical distribution profile (or just joint empirical distribution) be given by the $N$-tuple $q(n) = (q_1(n),\ldots,q_N(n))\in \Delta^N$.
A sequence of strategies $\{\sigma(n)\}_{n\geq 1}$ is said to be a \emph{fictitious play process} if for all $i\in\mathcal{N}$ and $n\geq 1$,\footnote{The initial strategy $\sigma(1)$ may be chosen arbitrarily.}
\begin{align} \label{def_FP_process}
\sigma_i(n+1) \in BR_i(q_{-i}(n)).
\end{align}
\noindent
In a FP process, it may be interpreted that players track the (marginal) empirical distribution of the actions of each opponent and treat this empirical distribution as a prediction (or forecast) of the future (mixed) strategy of that opponent. Players choose their next-stage actions as a myopic best response given this prediction.

In what follows, we will see that a \emph{FP-type} algorithm generalizes this idea---players will still form a forecast and choose their next-stage action as a myopic best response, but the manner in which the forecast can be formed will be significantly generalized.

\subsection{FP-Type Process}
\label{subsec_FP_type}\

A FP-type algorithm generalizes FP in two ways: (i) Players are permitted to track and react to a \emph{function} of the empirical history and, (ii) players consider an empirical history that may be non-uniformly weighted over time.\footnote{The class of FP-type algorithms considered here is similar to the class of best-response  algorithms considered in \cite{jordan1993}.}

In particular, let $Z$ denote a compact subset of $\mathbb{R}^m$ for some $m\in \mathbb{N}_+$ where the information that players keep track of is assumed to live. We refer to $Z$ as the \emph{observation space}. Let
$$g:\Delta(Y) \rightarrow Z$$
\noindent
be a map from the joint mixed strategy space to the observation space.
We assume the following
\begin{assumption} \label{a_g_function}
The observation map $g$ is uniformly continuous.
\end{assumption}

Let $\{z(n)\}_{n\geq 1}$ be a sequence in $Z$ that is defined recursively by letting $z(1)\in Z$ be arbitrary and for $n\geq 1$
\begin{equation}\label{def_empirical_dist_general}
z(n+1) = z(n) + \gamma(n)\left(g(\sigma(n+1)) - z(n)\right),
\end{equation}
where $\{\gamma(n)\}_{n\geq 1}$ is a predefined sequence of weights satisfying
\begin{assumption} \label{a_step_size}
$\lim_{n\rightarrow\infty} \gamma(n) = 0$, $\sum_{n\geq 1} \gamma(n) = \infty$.
\end{assumption}
We refer to $z(n)$ as the \emph{observation state} (the state $z(n)$ plays an analogous role to the empirical distribution $q(n)$ in classical FP).
In a FP-type algorithm, each player forms a prediction (or forecast) of the future behavior of opponents as a function of the observation state $z(n)$. In particular, for each player $i$, let $f_i:Z\rightarrow \Delta(Y_{-i})$ be a function mapping from the observation state to a forecast of opponents strategies. We make the following assumption
\begin{assumption} \label{a_prediction}
The forecast map $f_i$ is continuous for each $i\in\mathcal{N}$.
\end{assumption}

Given  $f = (f_1,\ldots,f_N)$, we define the best-response function $BR_{f}:Z\rightarrow \Delta(Y)$ associated with a FP-type algorithm as
$BR_{f}(z) = \prod_{i=1}^N BR_i(f_i(z)),$
where $BR_i$ is as defined in Section \ref{sec_prelims}.

When players are engaged in repeated play, we say the sequence $\{z(n)\}_{n\geq 1}$ is a FP-type process if each player's stage $(n+1)$ strategy is chosen as a myopic best response given their prediction of opponents strategies. That is, $\sigma_i(n+1) \in BR_i(f_i(z(n))),~\forall i,~\forall n$; or equivalently in recursive form (see \eqref{def_empirical_dist_general})
\begin{equation} \label{eq_empirical_dist_recursion}
z(n+1) - z(n) \in \gamma(n)\left( g(BR_{f}(z(n))) - z(n) \right).
\end{equation}
\begin{example}
Classical FP is recovered by letting $\gamma(n) = \frac{1}{n+1}$, letting the observation space be given by $Z=\Delta^N$, letting $g:\Delta(Y) \rightarrow \Delta^N$ with $g(z) = (g_1(z),\ldots,g_N(z))$, where $g_i:\Delta(Y)\rightarrow\Delta(Y_i)$ is given by $g_i(x) = \sum_{y_{-i} \in Y_{-i}} x(y_i,y_{-i})$, and for each $i$ letting $f_i:\Delta^N\rightarrow \Delta(Y_{-i})$ with $f_i(z) = (z_1,\ldots,z_{i-1},z_{i+1}\ldots,z_N)$.
\end{example}
\begin{example}
Joint Strategy FP \cite{marden06} is recovered by letting $\gamma(n) = \frac{1}{n+1}$, setting the observations space to be $Z=\Delta(Y)$, letting $g:\Delta(Y)\rightarrow\Delta(Y)$ to be the identity function and letting $f_i:\Delta(Y)\rightarrow\Delta(Y_{-i})$ be given by $f_i(z) = \sum_{y_i \in Y_i} z(y_i,y_{-i})$.
\end{example}
\begin{example}
Suppose all players use an identical action space given by $Y_i=\bar Y,~\forall i$. In this case, Empirical Centroid FP
(ECFP) \cite{swenson2012ECFP} is recovered by letting $\gamma(n) = \frac{1}{n+1}$, letting the observation space be given by $Z=\Delta(\bar Y)$, letting $g:\Delta(Y)\rightarrow \Delta(\bar Y)$ be given by $g(x) = N^{-1}\sum_{i=1}^N x_i$ where $y_i\mapsto x_i(y_i) = \sum_{y_{-i} \in Y_{-i}} x(y_i,y_{-i})$, and letting $f_i:\Delta(\bar Y) \rightarrow \Delta(Y_{-i})$ be given by $f_i(z) = (z,\ldots,z)$, i.e., the $(n-1)$-tuple containing repeated copies of $z$.\footnote{The ECFP algorithm is explored in more depth in Section \ref{sec_ecfp} in connection with the robustness result.}
\end{example}

We denote an instance of a FP-type algorithm as $\Psi = (\{\gamma(n)\}_{n\geq 1},g,(f_i)_{i=1}^n)$.

\subsection{Weakened Fictitious-Play-Type Process}\label{sec_weakened_FP}
In a FP-type algorithm it is assumed that players actions are always chosen as optimal (best response) strategies---a strong assumption.
In the spirit of \cite{van2000weakened,leslie2006generalised}, we wish to study the robustness of the convergence of a FP-type algorithm in a setting where agents may sometimes choose suboptimal actions. As we will see in later sections, this relaxation allows for a breadth of practical applications.

Formally, let the $\epsilon$-best response in this context be given by $BR_{f,\epsilon}:Z \rightarrow \Delta(Y)$, where
$BR_{f,\epsilon}(z) := \prod_{i=1}^N BR_{i,\epsilon}(f_i(z)),$
and where $BR_{i,\epsilon_n}$ is as defined in \eqref{def_epsilon_BR}.
Suppose that players choose their next-stage strategies as
\begin{equation} \label{eq_perturbed_BR}
\sigma(n+1) \in BR_{f,\epsilon_n} (z(n)),
\end{equation}
where we assume the sequence $\{\epsilon_n\}_{n\geq 1}$ satisfies
\begin{assumption} \label{a_BR_Decay}
$\lim_{n\rightarrow\infty} \epsilon_n = 0$.
\end{assumption}
We refer to the sequence $\{\epsilon_n\}_{n\geq 1}$ in \eqref{eq_perturbed_BR} as a \emph{best-response perturbation}. We refer to a sequence of strategies $\{\sigma(n)\}_{n\geq 1}$ satisfying \eqref{eq_perturbed_BR} as a \emph{weakened FP-type process} (cf. \cite{van2000weakened,leslie2006generalised}).

\subsection{Main Theoretical Result: Robustness Property for FP-Type Process}
The following theorem is the main theoretical result of the paper.  It shows that if A. \ref{a_g_function}--A. \ref{a_BR_Decay} are satisfied, then the set of limit points of a discrete-time FP-type process are contained in a chain-recurrent set of the associated differential inclusion
\begin{equation} \label{FP_diff_inclusion}
\dot{z}(t) \in g(BR_{f}(z(t)) - z(t).
\end{equation}
\begin{theorem}\label{thrm_conv_to_CTS}
Let $\Psi = (\{\gamma(n)\}_{n\geq 1},g,(f_i)_{i=1}^n)$ be a FP-type algorithm. Assume that $\Psi$ satisfies A. \ref{a_g_function}--A. \ref{a_prediction}. Assume that any best-response perturbation satisfies A. \ref{a_BR_Decay}. Then a weakened FP-type process converges to the chain recurrent set of the associated differential inclusion \eqref{FP_diff_inclusion}.
\end{theorem}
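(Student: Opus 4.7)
The plan is to invoke the general machinery of Section \ref{sec_CTS}: first cast the weakened FP-type dynamics as a difference inclusion driven by a $\delta$-perturbation of $F(z) := g(BR_f(z)) - z$, then use Proposition \ref{prop_delta_perturbation} to conclude that its continuous-time interpolation is a bounded perturbed solution of the differential inclusion \eqref{FP_diff_inclusion}, and finally invoke Theorem \ref{thrm_BPS_to_CTS} to deduce that the limit set of the interpolation (and hence of the discrete process) is internally chain recurrent for $F$.

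As a preliminary step, I would verify that $F$ satisfies A.\ref{a_F}. The best-response correspondence $x_{-i}\mapsto BR_i(x_{-i})$ is nonempty, compact, convex-valued and upper hemicontinuous by Berge's theorem, since $U_i$ is continuous and $\Delta(Y_i)$ is compact; composing with the continuous forecast maps $f_i$ (A.\ref{a_prediction}) and then with the continuous map $g$ (A.\ref{a_g_function}) preserves these properties, yielding that $F$ has closed graph and compact convex values. The linear growth condition is immediate since iterates live in the compact observation space $Z$ and $g$ is bounded.

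Next, I would rewrite the weakened FP-type recursion. By \eqref{def_empirical_dist_general} and \eqref{eq_perturbed_BR},
\begin{equation}
\gamma(n)^{-1}\bigl(z(n+1) - z(n)\bigr) \;=\; g(\sigma(n+1)) - z(n) \;\in\; g\bigl(BR_{f,\epsilon_n}(z(n))\bigr) - z(n).
\end{equation}
The central technical step is then to show that this last set is contained in $F^{\delta_n}(z(n))$ for some sequence $\delta_n \to 0$. This is precisely where Lemma \ref{lemma_BR_epsilon_to_delta} is decisive: it lets one realize any $\sigma \in BR_{f,\epsilon}(z)$ as an exact best response at some nearby point, with the proximity controlled quantitatively by $\epsilon$. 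Combining this with the uniform continuity of $g$ (A.\ref{a_g_function}) and the continuity of each $f_i$ (A.\ref{a_prediction}) absorbs the $\epsilon$-slack in the best response into a $\delta$-perturbation of $F$'s argument, with a modulus $\delta_n$ that tends to zero by A.\ref{a_BR_Decay}. Compactness of $Z$ yields $\sup_n \|z(n)\| < \infty$, A.\ref{a_step_size} handles the step-size hypotheses, and Proposition \ref{prop_delta_perturbation} then delivers that the continuous-time interpolation $w$ is a bounded perturbed solution of $F$. Theorem \ref{thrm_BPS_to_CTS} yields that $L(w)$ is internally chain recurrent, and since the limit points of $w$ coincide with those of $\{z(n)\}_{n\geq 1}$, the conclusion of the theorem follows.

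The main obstacle is unquestionably the $\epsilon$-to-$\delta$ translation. Best-response correspondences are notoriously discontinuous, so one cannot argue directly by continuity that an $\epsilon$-best response is close to some exact best response at the same point. The upper hemicontinuity of $BR_i$ yields only a qualitative statement along convergent subsequences, which is too weak to furnish a quantitative $\delta_n \to 0$ along the trajectory of the weakened process. The required quantitative bound is instead obtained by leveraging the explicit containment of $BR_{i,\epsilon}$ in exact best responses evaluated at perturbed arguments in the forecast space, which together with the (uniform) continuity hypotheses on $g$ and $f$ produces an explicit modulus relating the perturbation level in action space to a proximity level in state space. This is the technical novelty that Lemma \ref{lemma_BR_epsilon_to_delta} contributes over the framework of \cite{leslie2006generalised,benaim2005stochastic}.
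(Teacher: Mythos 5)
Your proposal is correct and follows essentially the same route as the paper: rewrite the weakened process as $\gamma(n)^{-1}(z(n+1)-z(n)) \in g(BR_{f,\epsilon_n}(z(n))) - z(n)$, use Lemma \ref{lemma_BR_epsilon_to_delta} together with the uniform continuity of $g$ to place the increments in $F^{\eta_n}(z(n))$ with $\eta_n \to 0$, and then apply Proposition \ref{prop_delta_perturbation} and Theorem \ref{thrm_BPS_to_CTS}. You also correctly identify the uniform-in-$z$ $\epsilon$-to-$\delta$ translation of Lemma \ref{lemma_BR_epsilon_to_delta} as the decisive technical ingredient, which is exactly the point the paper emphasizes.
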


After proving the theorem,
we give an example of how the theorem can be applied to study various notions of learning in the case of a particular FP-type algorithm (see Section \ref{sec_ecfp}).

The proof of Theorem \ref{thrm_conv_to_CTS} follows directly from the following lemma together with Proposition \ref{prop_delta_perturbation} and Theorem \ref{thrm_BPS_to_CTS}. The lemma shows that for sufficiently small $\epsilon$ the $\epsilon$-best responses are contained in the $\delta$-perturbations of $BR$ for all $z$.  While this is clearly true pointwise, the uniformity in $z$ has not previously been shown. This observation was not made in \cite{leslie2006generalised} and results in a gap in the proof presented there.
\begin{lemma}
Let $\epsilon_n \rightarrow 0$ as $n\rightarrow \infty$. Then there exists a sequence $\delta_n \rightarrow 0$ such that $BR_{f,\epsilon_n}(z) \subseteq BR^{\delta_n}_{f}(z)$ uniformly for $z\in Z$.
\label{lemma_BR_epsilon_to_delta}
\end{lemma}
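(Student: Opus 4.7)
The plan is to argue by contradiction, using compactness of $Z$ together with joint continuity of the expected utilities $U_i$ and the forecast maps $f_i$. I first reduce to the equivalent uniform statement: for every $\delta_0 > 0$ there exists $\epsilon^\star > 0$ such that $BR_{f,\epsilon}(z) \subseteq BR_f^{\delta_0}(z)$ for all $\epsilon \leq \epsilon^\star$ and all $z \in Z$ simultaneously. Once this is proved, for any prescribed $\epsilon_n \to 0$ one extracts a sequence $\delta_n \to 0$ in the obvious way (e.g., take $\delta_n$ to be a nonincreasing sequence tending to $0$ that dominates the smallest $\delta_0$ valid for $\epsilon_n$).

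Suppose the uniform statement fails. Then there exist $\delta_0 > 0$, $\epsilon_n \downarrow 0$, points $z_n \in Z$, and strategies $y_n = (y_{n,1},\ldots,y_{n,N}) \in BR_{f,\epsilon_n}(z_n)$ such that $y_n \notin BR_f^{\delta_0}(z_n)$ for every $n$. Using compactness of $Z$ and of each simplex $\Delta(Y_i)$, I pass to a subsequence along which $z_n \to z^\star \in Z$ and $y_n \to y^\star \in \Delta^N$.

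The central step is to show $y^\star \in BR_f(z^\star)$. For each coordinate $i$,
\begin{equation*}
U_i(y_{n,i},\,f_i(z_n)) \;\geq\; \max_{x_i \in \Delta(Y_i)} U_i(x_i,\,f_i(z_n)) - \epsilon_n.
\end{equation*}
Because $U_i$ is multilinear, hence jointly continuous on $\Delta(Y_i) \times \Delta(Y_{-i})$, and $\Delta(Y_i)$ is compact, the map $x_{-i} \mapsto \max_{x_i} U_i(x_i,x_{-i})$ is continuous; combined with continuity of $f_i$ (A.~\ref{a_prediction}), the right-hand side is continuous in $z_n$. Sending $n \to \infty$ and using $\epsilon_n \to 0$ yields $U_i(y_i^\star, f_i(z^\star)) \geq \max_{x_i} U_i(x_i, f_i(z^\star))$, so $y_i^\star \in BR_i(f_i(z^\star))$ for every $i$, and therefore $y^\star \in BR_f(z^\star)$.

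For $n$ sufficiently large, $\|z_n - z^\star\| < \delta_0$ and $\|y_n - y^\star\| < \delta_0$. Setting $z' := z^\star$ in Definition~\ref{def_delta_perturbation} applied to $BR_f$, we have $\|z' - z_n\| < \delta_0$ and $d(y_n, BR_f(z')) \leq \|y_n - y^\star\| < \delta_0$, so $y_n \in BR_f^{\delta_0}(z_n)$, contradicting the choice of $(z_n, y_n)$. The main subtlety---precisely the gap in \cite{leslie2006generalised} flagged in the text---is the uniformity over $z$: pointwise upper hemicontinuity of the $\epsilon$-best-response correspondence gives a $z$-dependent $\epsilon$-threshold trivially, but we need a single $\delta_n$ that serves every $z \in Z$, and compactness of $Z$ is exactly what promotes the pointwise statement to the required uniform one via the subsequence argument above.
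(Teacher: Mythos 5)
Your proof is correct, but it takes a genuinely different route from the paper's. The paper gives a constructive argument in the spirit of \cite{hurkens1995}: it partitions $Z$ into sets $D(T)$ built from stability sets $St(T)$, extracts a uniform utility gap $\xi_\delta>0$ between best responses and actions outside $T_i(z)$ via compactness of the closures $\bar D(T)$, bounds the mass an $\epsilon$-best response can place off $T_i(z)$ by $\epsilon/\xi_\delta$, and thereby obtains the explicit threshold $\epsilon \leq \min\{\delta\xi_\delta,\delta\}$. You instead run a soft compactness-and-contradiction argument: negate the uniform statement, extract convergent subsequences $z_n\to z^\star$, $y_n\to y^\star$, use joint continuity of $U_i$, continuity of $x_{-i}\mapsto\max_{x_i}U_i(x_i,x_{-i})$, and continuity of $f_i$ (A.~\ref{a_prediction}) to conclude $y^\star\in BR_f(z^\star)$, and then contradict $y_n\notin BR_f^{\delta_0}(z_n)$ by taking $z'=z^\star$ in Definition~\ref{def_delta_perturbation}; the strict inequalities in that definition are satisfied for large $n$, so the contradiction is clean. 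Each step checks out (the monotonicity of $\epsilon\mapsto BR_{f,\epsilon}$ and the compactness of $Z$ and $\Delta^N$ are the only facts you use beyond continuity, and both hold here), and your closing remark correctly identifies that compactness of $Z$ is what upgrades the pointwise closed-graph property to the uniform inclusion---which is exactly the gap in \cite{leslie2006generalised} that the lemma is meant to fill. What each approach buys: yours is shorter and generalizes verbatim to any closed-graph correspondence on a compact domain, but is non-quantitative; the paper's construction yields an explicit functional relationship between $\epsilon$ and $\delta$ (through $\xi_\delta$), which can matter if one wants rates, and introduces the stability-set decomposition that is reused conceptually elsewhere.
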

\begin{proof}
We work with the supremum norm on $Z$ and $\Delta(Y_i)$
throughout the proof.

Fix an arbitrary $\delta>0$. Following \cite{hurkens1995}, define the ``stability set'' of a (joint) action $y \in Y$ as
$$St(y) := \{z\in Z:~ y_i \in BR_i(f_i(z)),~\forall i\}.$$
Note that the closer that $z$ is to boundary of $St(y)$, the smaller that $\epsilon$ must be to ensure that $\epsilon$-best responses place large mass on $y$, and hence are $\delta$-perturbations of $y=BR(z)$.  To gain the uniform inclusion of the $\epsilon$-best responses in the $\delta$-perturbations we consider the interior of the sets $St(y)$ separately from neighbourhoods of boundaries of the stability sets.  To this end, extend the stability set concept to sets of actions $T\subseteq Y$ by defining
$$St(T):= \bigcap_{y\in T} St(y)$$
to be the set of $z\in Z$ such that all actions $y\in T$ are best responses to $z$.
In what follows, we will use the stability sets $St(T)$ to construct a finite cover $\{D(T)\}_{T\subseteq Y}$ of $Z$ such that $BR(f(z))\subseteq T$ for each $z\in D(T)$.  This allows us to show that $\epsilon$-best responses to elements in $D(T)$ place most of their mass on $T$, and in particular it can be shown that for each set $D(T)\subseteq Z$ there holds
\begin{equation} \label{lemma_main_eq2}
BR_{f,\epsilon}(z) \subseteq BR_{f}^{\delta}(z),~\mbox{ for all } z\in D(T)
\end{equation}
for all $\epsilon$ sufficiently small. Since the cover is finite, we can show that in fact
\begin{equation}\label{lemma_main_eq3}
BR_{f,\epsilon}(z) \subseteq BR_{f}^{\delta}(z), ~\mbox{ for all } z\in Z
\end{equation}
holds for all $\epsilon$ sufficiently small. (We note, however, that we proceed along a slightly more direct route, showing \eqref{lemma_main_eq3} without directly verifying \eqref{lemma_main_eq2}.)

To this end, note that by the upper hemicontinuity of $BR_i$ and continuity of $f_i$, we have that $St(y)$ and $St(T)$ are closed sets.
For any $\eta>0$ and any $T\subseteq Y$, let $B(St(T),\eta)$ be the open ball of radius $\eta$ about $St(T)$ which is empty if $St(T)$ is empty. Let $M = \prod_{i\in \mathcal{N}}|Y_i|$ and for each $k\in \{1,2,\ldots,M\}$ let $\mathcal{T}^k$ be the collection of all subsets $T\subseteq Y$ such that $|T| = k$. For the tuple $\eta_{>k}=(\eta_{k+1},\ldots,\eta_{M})$ define the ``exclusion set''
$$E^k(\eta_{>k}) := \bigcup_{\kappa = k+1}^{M} \bigcup_{T\in\mathcal{T}^{\kappa}} B(St(T),\eta_{\kappa})$$
to be the set of $z\in Z$ that are close to any stability sets $St(T)$ with $|T|>k$, where close is measured by the tuple $\eta_{>k}$.

We now work recursively from $k=M$ down to $k=1$. Start by letting $\eta_{M} = \delta$ and let
$$D(Y) := B(St(Y),\eta_{M}).$$
Now let $k \in \{1,\ldots,M-1\}$ and suppose $\eta_{>k}$ is given. Suppose
$T\in \mathcal{T}^{k},$ 
and let $\tilde T \subseteq Y$ such that $\tilde T\not\subseteq T.$  Then $|T\cup \tilde T| >k$, so by the definition of $E^k(\eta_{>k})$ we have that $St(T)\cap St(\tilde{T})=St(T\cup \tilde T) \subseteq E^k(\eta_{>k})$.
Therefore $St(T) \cap St(\tilde T) \cap E^k(\eta_{>k})^c = \emptyset.$
Since $E^k(\eta_{>k})$ is open by definition, the complement is closed. Therefore the sets $St(T)\cap E^k(\eta_{>k})^c$ and $St(\tilde T)$
are disjoint compact sets and either have a minimal separating distance or at least one is empty.  We can therefore fix an $\eta_k$ such that, for each $T\in\mathcal{T}^{k},$
$$D(T) := B(St(T),\eta_{k})\cap E^k(\eta_{>k})^c$$
is separated from
\begin{enumerate}
\item $St(\tilde{T})$ for all $\tilde{T}\subseteq Y$ such that $\tilde{T}\not\subseteq T$, and
\item $D(\tilde{T})$ for all $\tilde{T}\in\mathcal{T}^k$ with $\tilde{T}\neq T$.
\end{enumerate}
%
Iterating this reasoning down to $k=1$ defines the full set of $\eta_k$ values as well as $D(T)$ for all $T\subseteq Y$ with $T\neq\emptyset$.

We now show that the sets $\{D(T)\}_{T\subseteq Y}$ partition $Z$. By definition we have that $D(Y) = B(St(Y),\eta_M)$; using a backwards induction argument one may verify that
\begin{equation} \label{eq_D_B_relation}
\bigcup_{k=1}^M\bigcup_{T\in\mathcal{T}^{k}} D(T) = \bigcup_{k=1}^M\bigcup_{T\in\mathcal{T}^{k}} B(St(T),\eta_{k}).
\end{equation}
Hence,
$Z = \bigcup_{T\subseteq Y}St(T) \subseteq \bigcup_{ T\subseteq Y} D(T) \subseteq Z,$
where the equality holds because there exists a best response to any $z\in Z$, and the first containment holds by \eqref{eq_D_B_relation}.
Furthermore, by property 2) above
(and the fact that, by construction, $D(T)\cap D(\tilde T) = \emptyset$ for $|T|\not= |\tilde T|$)
we have that $D(T)\cap D(\tilde T) = \emptyset,~\forall T,\tilde T \subseteq Y,~\tilde T \not= T$.
Hence the sets $\{D(T)\}_{T\subseteq Y}$ partition $Z$.

We wish to show that for $z\in D(T)$, the $\epsilon$-best responses place most of their mass on elements in $T$.  To this end, let $T\in \mathcal{T}^k$ for arbitrary $1\leq k \leq M$, and let $\bar D(T)$ be the closure of $D(T)$. We claim that if $z\in \bar D(T)$, then all pure strategy best responses to $z$ are contained in $T$. To see this, suppose contrariwise that $z\in \bar D(T)$ has a pure strategy best response not contained in $T$. Then $z\in St(\tilde T)$ for some $\tilde T \not\subseteq T$, which violates Property 1) above.

Now define, for $z\in Z$, the set $T(z)$ to be the $T\subseteq Y$ such that $z\in D(T)$.  Also define $T_i(z) := \{y_i \in Y_i: (y_i,y_{-i}) \in T(z) \mbox{ for some } y_{-i} \in Y_{-i}\}$, so that all of Player $i$'s pure strategy best responses to $z\in Z$ are contained in $T_i(z)$.
Thus, for $z \in \bar D(T)$, for each $i$ there exists a $\xi_{i,\delta}(z)>0$ such that
\begin{equation} \label{lemma_main_eq1}
\max_{y_i \in Y_i} U_i(\ones_{y_i},f(z)) - \max_{\tilde y_i \not\in T_i(z)} U_i(\ones_{\tilde y_i},f(z)) = \xi_{i,\delta}(z).\footnote{For completeness we emphasize that $\xi_{i,\delta}$ is in fact a function of $\delta$, as well as $z$.}
\end{equation}
Since $\bar D(T)$ is compact and $U_i$ (and hence $\xi_i$) is continuous, we get $\inf\limits_{p \in \bar D(T)} \xi_{i,\delta}(z) >0$, $\forall i$. Since there are finitely many $T\subseteq Y$ and $i\in \mathcal{N}$, there exists a $\xi_{\delta}>0$ such that for each $i$ and $z \in Z$, $\max_{y_i \in Y_i} U_i(\ones_{y_i},f_i(z)) - \max_{\tilde y_i \notin T_i(z)} U_i(\ones_{\tilde y_i},f_i(z)) \geq \xi_{\delta}.$
We have shown that for any $i$ and any $z$, any action not in $T_i(z)$ receives utility less than the best response by at least an amount $\xi_\delta$.

Invoking the linearity of $z_i\mapsto U_i(z_i,z_{-i})$, it follows that for $z\in Z$, for each $i$, an $\epsilon$-best response to $z$ can put probability at most $\epsilon/\xi_{\delta}$ on actions not in $T_i(z)$. That is, for any $z\in Z$ and for any $i\in \mathcal{N}$,
$$BR_{i,\epsilon}(f_i(z)) \subseteq \Big\{x_i\in \Delta(Y_i) :~ \sum_{y_i \in T_i(z)} z_i(y_i) \geq 1-\frac{\epsilon}{\xi_{\delta}} \Big\}.$$

Let $\epsilon \leq \min\{\delta\xi_{\delta}, \delta\}$ and let $x\in BR_{f,\epsilon}(z)$. By the above, $x$ is a distance at most $\delta$ from a strategy $x'$ which places all its mass on $T(z)$. Simultaneously, by the construction of $D(T)$, $z$ is a distance at most $\delta$ from the set $St(T(z))$; i.e., there exists a $z'\in St(T(z))$ such that $d(z,z') \leq \delta$. By the definition of the stability set, we have $x' \in BR_{f}(z')$. This shows that $x \in BR_{f}^\delta(z)$. Since $z$ was arbitrary, and this holds for any $x \in BR_{f,\epsilon}(z)$ we have
$BR_{f,\epsilon}(z) \subseteq BR_{f}^{\delta}(z), ~\mbox{ for all }~ z\in Z.$
Since this holds for any $\epsilon \leq \min\{\delta\xi_{\delta}, \delta\}$, it follows that for any sequence $\epsilon_n\rightarrow 0$ there exists a sequence $\delta_n \rightarrow 0$ such that $BR_{f,\epsilon_n}(z) \subseteq BR^{\delta_n}_{f}(z)$ for any $z\in Z$.
\end{proof}

We now prove Theorem \ref{thrm_conv_to_CTS}.
\begin{proof}
By assumption, players choose their strategies according to \eqref{eq_perturbed_BR}. Applying \eqref{def_empirical_dist_general} we get the recursive form
$ \gamma(n)^{-1}(z(n+1) - z(n)) \in g(BR_{f,\epsilon_n}(z(n))) - z(n),$
where $\epsilon_n\rightarrow 0$.
By Lemma \ref{lemma_BR_epsilon_to_delta}, we know that
$\gamma(n)^{-1}(z(n+1)-z(n)) \in g(BR^{\delta_n}_f(z(n)))-z(n)$
for some sequence $\delta_n\rightarrow 0$. Let $F:Z\rightrightarrows Z$ be given by $F(z) = g(BR_{f}(z))-z$. Since $g$ is uniformly continuous, the previous equation implies that $\gamma(n)^{-1}(z(n+1)-z(n)) \in F^{\eta_n}(z(n))$ for some sequence $\eta_n\rightarrow 0$. By Proposition \ref{prop_delta_perturbation}, the continuous-time interpolation of $\{z(n)\}_{n\geq 1}$ is a bounded perturbed solution to the associated differential inclusion \eqref{FP_diff_inclusion}. The result then follows by Theorem \ref{thrm_BPS_to_CTS}.
\end{proof}

An important consequence of Theorem \ref{thrm_conv_to_CTS} is that, if one wishes to show convergence of a FP-type algorithm to some equilibrium set, one need only verify that the associated chain recurrent set is contained in the equilibrium set.

This has been shown, for example, with the set of NE and classical FP in potential games \cite{benaim2005stochastic}, two-player zero-sum games \cite{hofbauer1995stability}, and generic $2\times m$ games \cite{berger2005fictitious}. Thus, the following important result (\cite{leslie2006generalised}, Corollary 5) may also be seen as a consequence of Theorem \ref{thrm_conv_to_CTS}. As this result will arise
in the subsequent discussion, we find it convenient to state it here.
\begin{cor}[\cite{leslie2006generalised}, Corollary 5] \label{cor_FP_robustness}
Let $\Gamma$ be a potential game, two-player zero-sum game, or generic $2\times m$ game. Assume that any best-response perturbation satisfies A. \ref{a_BR_Decay}. Then the corresponding FP process converges to the set of NE in the sense that $\lim_{n\rightarrow\infty}d(q(n),NE)=0$.
\end{cor}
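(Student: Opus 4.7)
The plan is to derive Corollary \ref{cor_FP_robustness} as an immediate specialization of Theorem \ref{thrm_conv_to_CTS} to the instance of classical FP described in Example 1, and then to invoke existing characterizations of the chain-recurrent set of the best-response differential inclusion for the three game classes in question.

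First, I would verify that classical FP fits the FP-type framework with all structural assumptions satisfied. In the setup of Example 1 we have observation space $Z=\Delta^N$ (compact), step size $\gamma(n)=1/(n+1)$ (which satisfies A. \ref{a_step_size}), observation map $g$ taking marginals (linear, hence uniformly continuous, so A. \ref{a_g_function} holds), and forecast maps $f_i(z)=(z_1,\ldots,z_{i-1},z_{i+1},\ldots,z_N)$ (projections, continuous, so A. \ref{a_prediction} holds). Under these identifications, the observation state $z(n)$ produced by \eqref{def_empirical_dist_general} is exactly the empirical distribution $q(n)$ of \eqref{def_empirical_dist}, and the $\epsilon$-best response condition \eqref{eq_perturbed_BR} is exactly the perturbed-FP recursion. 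The hypothesis of the corollary is precisely A. \ref{a_BR_Decay} on the perturbation sequence.

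Second, Theorem \ref{thrm_conv_to_CTS} then directly yields that $q(n)$ converges to the internally chain-recurrent set of the associated best-response differential inclusion \eqref{FP_diff_inclusion}, which for classical FP reduces to the standard best-response dynamics $\dot z \in BR(z) - z$ on $\Delta^N$.

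Third, I would close the argument by quoting the known characterizations of this chain-recurrent set for the three classes of games:
\begin{itemize}
\item For potential games, the chain-recurrent set of the best-response dynamics is contained in the set of Nash equilibria (Benaïm--Hofbauer--Sorin \cite{benaim2005stochastic}, via the potential function serving as a Lyapunov function that strictly decreases off $NE$).
\item For two-player zero-sum games, the same containment is due to Hofbauer \cite{hofbauer1995stability}.
\item For generic $2\times m$ games, this is Berger \cite{berger2005fictitious}.
\end{itemize}
Combining with Theorem \ref{thrm_conv_to_CTS} gives $d(q(n),NE)\to 0$.

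The only subtle step is the third one: verifying that each of the three cited results really does yield containment of the internally chain-recurrent set (not merely the $\omega$-limit set of solutions starting at a specific initial condition) in $NE$. In each case, however, the cited work exhibits a global Lyapunov-type function (the potential in the potential-game case, the duality gap in the zero-sum case, and Berger's construction in the $2\times m$ case) that strictly decreases along non-equilibrium solutions of the best-response dynamics; standard arguments (see \cite{benaim2005stochastic}) then rule out chain-recurrent behavior outside $NE$. Once this is in place the remainder of the argument is bookkeeping, so I would not expect any further obstacles.
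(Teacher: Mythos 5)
Your proposal is correct and follows essentially the same route as the paper: the corollary is obtained by instantiating Theorem \ref{thrm_conv_to_CTS} with the classical-FP choice of $Z$, $g$, $f_i$, and $\gamma(n)$ from Example 1, and then invoking the known containments of the chain-recurrent set of the best-response differential inclusion in $NE$ for potential games \cite{benaim2005stochastic}, two-player zero-sum games \cite{hofbauer1995stability}, and generic $2\times m$ games \cite{berger2005fictitious}. Your added care in checking A.~\ref{a_g_function}--A.~\ref{a_prediction} and in noting that the cited results must control the chain-recurrent set (not merely $\omega$-limit sets) is a sensible refinement of the same argument.
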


\section{Example: Empirical Centroid Fictitious Play} \label{sec_ecfp}
In classical FP each player $i$ is required to track the marginal empirical distribution $z_j,~j\not=i$ of every other player (see \eqref{def_FP_process}). The memory size of this vector (that must be tracked by each player) grows linearly with the number of players.  In large-scale settings it can be impractical for players to track such a large quantity of information.

In this section we consider a variant of FP in which players only track an aggregate statistic which preserves some (though not necessarily all) of the relevant information about the game action history. In the spirit of a FP-type algorithm, players form a prediction of the future behavior of opponents using the aggregate statistic.

In order to ensure the process is well defined, assume that\footnote{For the ease in exposition, ECFP is presented here in its most basic form. A more general form of ECFP is discussed in \cite{swenson2012ECFP} where this assumption may be relaxed.}
\begin{assumption} \label{a_perm_invariant}
All players use an identical action space $\bar Y$; i.e., $Y_i = \bar Y,~\forall i$. Moreover, all players use an identical permutation-invariant utility function.
\end{assumption}
More details regarding this class of games and the manner in which this assumption can be weakened can be found in \cite{swenson2012ECFP}.

In ECFP, players track and best respond to the \emph{empirical centroid distribution} $\bar q(n) \in \Delta^N$, defined as $\bar q(n) := \frac{1}{N}\sum_{i=1}^N q_i(n)$, where $q_i(n)$ is as defined in \eqref{def_empirical_dist}. In particular, each player $i$ chooses their next-stage strategy according to the rule
\begin{equation} \label{ECFP_BR_rule}
\sigma_i(n) \in BR_i(\bar q_{-i}(n-1)),
\end{equation}
where $\bar q_{-i}(n)\in \Delta(Y_{-i})$ is given by $\bar q_{-i}(n) := (\bar q(n),\ldots,\bar q(n))$, i.e., the $(n-1)$-tuple containing repeated copies of $\bar q(n)$.

Two notions of learning have been studied for ECFP. Note that both use ECFP dynamics, but achieve different learning results by using different observation spaces.
Below, we briefly review each notion in the context of the robustness result.

In order to study the first notion of learning we make the following assignments to terms from Section \ref{sec_FP-type}. Let $\gamma(n) = \frac{1}{n+1}$, let $Z=\Delta(\bar Y)$, let $g:\Delta(Y)\rightarrow \Delta(\bar Y)$ be given by $g(z) = N^{-1}\sum_{i=1}^N z_i$, where $y_i\mapsto z_i(y_i) = \sum_{y_{-i} \in Y_{-i}} z_i(y_i,y_{-i})$, and let $f_i:\Delta(\bar Y) \rightarrow \Delta(Y)$ be given by $f_i(x) = (x,\ldots,x)$, i.e., the $(n-1)$-tuple containing repeated copies of $x$.
Note that the induced dynamics comport with \eqref{ECFP_BR_rule}.


For strategies $p\in \Delta^N$, we define the set of \emph{consensus Nash equilibria} (CNE) by
$CNE := \{p\in NE:~p_1 =\ldots = p_N\}.$
Define $\overline{CNE} := \{\bar p\in \Delta(\bar Y):~ p=(\bar p,\ldots,\bar p) \in NE\}$, and note that a strategy $p\in \Delta^N$ is a CNE if and only if there exists a $\bar p \in \overline{CNE}$ such that $p=(\bar p,\ldots,\bar p)$.

It has been shown in \cite{swenson2015Robust} that the chain recurrent sets of the associated differential inclusion \eqref{FP_diff_inclusion} are contained in the $\overline{CNE}$ set. We thus obtain the following corollary to Theorem \ref{thrm_conv_to_CTS}:
\begin{cor}
Let $\Gamma$ satisfy A. \ref{a_perm_invariant}. Suppose players are engaged in a repeated play process on $\Gamma$ and choose their next stage actions according to the rule \eqref{ECFP_BR_rule}. Then players learn CNE strategies in the sense that $\lim_{n\rightarrow\infty} d( z(n),\overline{CNE})=0$, or equivalently $\lim_{n\rightarrow\infty} d(z^N(n),CNE) = 0$ where $z^N(n) = (z(n),\ldots,z(n))$ is the $N$-tuple containing repeated copies of $z(n)$.
\end{cor}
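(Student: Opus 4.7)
My plan is to recognize the corollary as a direct application of the main robustness result (Theorem \ref{thrm_conv_to_CTS}) specialized to the ECFP instance of the FP-type framework introduced in Section \ref{sec_FP-type}. The ingredients are already fully set up in the paragraph preceding the corollary: the step size is $\gamma(n)=1/(n+1)$, the observation space is $Z=\Delta(\bar Y)$, the observation map $g(z)=N^{-1}\sum_i z_i$ (where $z_i$ is the $i$th marginal) and the forecast maps $f_i(x)=(x,\ldots,x)$. The update rule \eqref{ECFP_BR_rule} is exactly the FP-type recursion \eqref{eq_empirical_dist_recursion} for this choice, since $\bar q_{-i}(n)=f_i(\bar q(n))$ and $\bar q(n)$ is the discrete-time averaging of the marginal strategies, which unfolds into the recursion $z(n+1)-z(n)=\gamma(n)(g(\sigma(n+1))-z(n))$.

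Next I would verify assumptions A. \ref{a_g_function}--A. \ref{a_prediction}. The step-size condition A. \ref{a_step_size} is immediate since $\gamma(n)=1/(n+1)$ satisfies $\gamma(n)\to 0$ and $\sum_n\gamma(n)=\infty$. The map $g$ is linear on $\Delta(Y)$ and each $f_i$ is linear on $\Delta(\bar Y)$, so both are uniformly continuous on their (compact) domains, giving A. \ref{a_g_function} and A. \ref{a_prediction}. Because the statement concerns the unperturbed ECFP rule \eqref{ECFP_BR_rule}, we may simply take $\epsilon_n\equiv 0$ which trivially satisfies A. \ref{a_BR_Decay}; thus \eqref{ECFP_BR_rule} is a (weakened) FP-type process in the sense of Section \ref{sec_weakened_FP}.

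With the framework verified, Theorem \ref{thrm_conv_to_CTS} applies and yields that $z(n)$ converges to the chain-recurrent set of the associated differential inclusion \eqref{FP_diff_inclusion}, namely $\dot z(t)\in g(BR_f(z(t)))-z(t)$ on $\Delta(\bar Y)$. By the cited result from \cite{swenson2015Robust}, under A. \ref{a_perm_invariant} this chain-recurrent set is contained in $\overline{CNE}$. Chaining these two inclusions gives $\lim_{n\to\infty}d(z(n),\overline{CNE})=0$.

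Finally, the equivalent form $\lim_{n\to\infty}d(z^N(n),CNE)=0$ is obtained by observing that the map $\bar p\mapsto(\bar p,\ldots,\bar p)$ is an isometry (up to the factor $\sqrt{N}$) from $\Delta(\bar Y)$ into $\Delta^N$ under the Euclidean norm, and by definition $\overline{CNE}$ is the preimage of $CNE$ under this embedding; the two distances therefore vanish simultaneously. The main (and only) non-routine ingredient here is the chain-recurrence inclusion quoted from \cite{swenson2015Robust}; everything else consists of matching definitions and invoking Theorem \ref{thrm_conv_to_CTS}.
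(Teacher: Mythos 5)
Your proof is correct and matches the paper's (implicit) argument exactly: the paper derives this corollary by casting ECFP as a FP-type algorithm with the stated $\gamma$, $Z$, $g$, $f_i$, invoking Theorem \ref{thrm_conv_to_CTS}, and then citing \cite{swenson2015Robust} for the inclusion of the chain-recurrent set in $\overline{CNE}$. Your verification of A.~\ref{a_g_function}--A.~\ref{a_BR_Decay} and the $\sqrt{N}$-scaled isometry linking $d(z(n),\overline{CNE})$ with $d(z^N(n),CNE)$ are the right (and only) details to check.
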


In order to study the second notion of learning we let $\gamma(n) = \frac{1}{n+1}$, let $Z = \Delta^N$, let $g:\Delta(Y)\rightarrow \Delta^N$ be given by $g(z) = (g_1(z),\ldots,g_N(z))$, where $g_i:\Delta(Y)\mapsto \Delta(\bar Y)$ with $g_i(z) = \sum_{y_{-i} \in Y_{-i}} z(y_i,y_{-i})$, and let $f_i:\Delta^N\rightarrow\Delta(Y)$ be given by $f_i(z) = \prod_{i=1}^N \bar z_i$, where $\bar z_i(y_i) = N^{-1}\sum_{j=1}^N z_j(y_i),~y_i \in \bar Y$. Note that the induced dynamics again comport with \eqref{ECFP_BR_rule}. In this case, note that the observation state lives in $\Delta^N$ and corresponds to the standard time-averaged empirical distribution familiar from classical FP.

For a strategy $p = (p_1,\ldots,p_N)\in \Delta^N$, define $\bar p := N^{-1}\sum_{i=1}^N p_i \in \Delta(\bar Y)$, and define $\bar p_{-i} := \prod_{j\not= i} \bar p \in \Delta(Y_{-i})$. Let the set of \emph{Mean-Centric Equilibria} be defined by
$MCE := \{p\in \Delta^N:~ U_i(p_i,\bar p_{-i}) \geq U_i(p_i',\bar p_{-i}),~\forall p_i' \in \Delta(\bar Y) \}$.
It has been shown in \cite{swenson2015Robust} that the chain recurrent sets of the associated differential inclusion \eqref{FP_diff_inclusion}
are contained in the set of MCE. Invoking Theorem 1 we obtain a second mode of learning as stated in the following corollary.
\begin{cor}
Let $\Gamma$ satisfy A. \ref{a_perm_invariant}. Suppose players are engaged in a repeated play process on $\Gamma$ and choose their next stage actions according to the rule \eqref{ECFP_BR_rule}. Then players learn MCE strategies in the sense that $\lim_{n\rightarrow\infty} d(z(n),MCE)=0$.
\end{cor}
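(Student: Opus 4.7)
The plan is to obtain this corollary as an essentially direct consequence of Theorem \ref{thrm_conv_to_CTS}, combined with the characterization of the chain recurrent set from \cite{swenson2015Robust} that has already been invoked. First I would verify that the construction with $Z=\Delta^N$, $\gamma(n)=1/(n+1)$, $g(z)=(g_1(z),\ldots,g_N(z))$ with $g_i$ the marginalization onto $\Delta(\bar Y)$, and $f_i(z)=(\bar z,\ldots,\bar z)\in\Delta(Y_{-i})$ is indeed a legitimate FP-type algorithm in the sense of Section \ref{subsec_FP_type}. The only thing to check is that the induced dynamics agree with the ECFP rule \eqref{ECFP_BR_rule}: unrolling the recursion \eqref{def_empirical_dist_general} gives $z_i(n)=\frac{1}{n}\sum_{s=1}^n \sigma_i(s)=q_i(n)$, whence $\bar z(n)=\bar q(n)$ and $f_i(z(n))=\bar q_{-i}(n)$, so $\sigma_i(n+1)\in BR_i(f_i(z(n)))$ coincides with \eqref{ECFP_BR_rule}.

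Next I would verify the hypotheses of Theorem \ref{thrm_conv_to_CTS}. The step-size condition A. \ref{a_step_size} is immediate from $\gamma(n)=1/(n+1)$. Since $g$ and every $f_i$ are linear maps on the compact simplex $\Delta(Y)$, they are in particular uniformly continuous and continuous respectively, so A. \ref{a_g_function} and A. \ref{a_prediction} hold. Because the corollary deals with an unperturbed process (players follow the exact best response in \eqref{ECFP_BR_rule}), A. \ref{a_BR_Decay} trivially holds with $\epsilon_n\equiv 0$; alternatively, the statement is robust and remains valid for any weakened version of ECFP with vanishing $\epsilon_n$.

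Applying Theorem \ref{thrm_conv_to_CTS} then yields that $z(n)$ converges almost surely to the internally chain recurrent set of the associated differential inclusion \eqref{FP_diff_inclusion}, namely $\dot z(t) \in g(BR_f(z(t))) - z(t)$. To finish the argument I would cite the result from \cite{swenson2015Robust} (already invoked immediately above the statement of the corollary in the paper) which shows that every internally chain recurrent set of this particular differential inclusion is contained in $MCE$. Combining these two inclusions gives $\lim_{n\to\infty} d(z(n), MCE)=0$, which is the stated conclusion.

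I do not expect any serious obstacle here, since all the analytical heavy lifting has already been done: Lemma \ref{lemma_BR_epsilon_to_delta} and Theorem \ref{thrm_conv_to_CTS} handle the discrete-to-continuous passage, and \cite{swenson2015Robust} handles the identification of the chain recurrent set with $MCE$. The only modest point requiring care is confirming that the particular choice of observation space $Z=\Delta^N$ and forecast maps $f_i$ used in this corollary (as opposed to those used for the first, CNE-flavored corollary) is the one covered by the cited chain-recurrence result in \cite{swenson2015Robust}; this is a matter of matching notation rather than a genuine technical difficulty.
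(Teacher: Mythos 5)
Your proposal is correct and follows exactly the route the paper intends: the paper gives no separate proof for this corollary beyond noting that \cite{swenson2015Robust} places the chain recurrent sets of \eqref{FP_diff_inclusion} (for the second choice of $Z=\Delta^N$, $g$, and $f_i$) inside $MCE$ and then invoking Theorem \ref{thrm_conv_to_CTS}, and your verification of A.~\ref{a_g_function}--A.~\ref{a_BR_Decay} and of the agreement of the induced dynamics with \eqref{ECFP_BR_rule} supplies precisely the routine details the paper leaves implicit. The only cosmetic quibble is the phrase ``almost surely,'' which is unnecessary here since the unperturbed ECFP process requires no randomness.
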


\section{Application: Distributed Implementation of a FP-Type Algorithm} \label{sec_distributed_fp}
In the formulation of FP, as well as the FP-type algorithm, it is implicitly assumed that each agent has instantaneous access to all information required to compute her next-stage action. For example, in classical FP (Section \ref{sec_FP}) each agent is assumed to have perfect knowledge of the empirical distribution $q(n)$ (see \eqref{def_empirical_dist}) in order to choose an action in stage $n+1$.
This assumption can be impractical in large-scale settings where physical limitations may hinder agents' ability to directly communicate with one another.

One approach to mitigate this problem is to assume that agents are equipped with an overlaid communication graph through which information may be gradually disseminated through the course of the learning process \cite{swenson2012ECFP,koshal2012gossip,eksin2015distributed}. In particular, suppose the following assumption holds:
\begin{assumption}
Agents may observe only their own actions. However, agents are equipped with a (possibly sparse) interagent communication graph $G=(\mathcal{V},\mathcal{E})$. Agents may exchange information with neighboring agents (as defined by the graph $G$) once per iteration of the repeated play.
\end{assumption}

Within this framework, agents engaged in a FP-type process may not have perfect knowledge of the observation state $z(n)$. Instead, let $\hat z^i(n)$ be an estimate that agent $i$ maintains of $z(n)$.

A prototypical distributed implementation of a FP-type algorithm is given below.

\subsection{Distributed FP-Type Algorithm}
$~$\\
\noindent \textit{Initialize}\\
\noindent (i) Initialize the state estimate $\hat z^i(1)$.\footnote{The initialization of $\hat z^i(n)$ may be subject to some conditions depending on the particular information dissemination scheme used \cite{swenson2012ECFP,dimakis2010gossip}. See discussion below for more details.} Let players choose an arbitrary initial action.\\

\noindent \textit{Iterate} $(n\geq 1)$\\
\noindent (ii) Each agent $i$ chooses a next-stage strategy according to the rule $\sigma_i(n+1) \in BR_i(f_i(\hat z^i(n)),$
where $f_i(\cdot)$ satisfies A. \ref{a_prediction}.
The (true) observation state at time $(n+1)$ is given by $z(n+1) = z(n) + \gamma(n)\left( g(\sigma_i(n+1)) - z(n) \right)$. (It is not assumed that players have knowledge of $(z(n))$.)\\

\noindent (iii) Each agent $i$ may engage in one round of information exchange with neighboring agents (as defined by $G$) and update their estimate $\hat z^i(n+1)$ using the information obtained.

\subsection{Discussion}
Analysis of the the above algorithm prototype reveals that step (ii) may be seen as a best response perturbation (this follows from the Lipschitz continuity of $U_i$). It is straightforward to show that if $\|\hat z^i(n) - z(n)\|\rightarrow 0 ,~\forall i$, as $n\rightarrow\infty$ then A. \ref{a_BR_Decay} holds, and hence the process falls under the purview of Theorem \ref{thrm_conv_to_CTS}.

This has been applied, for example, in order to develop distributed implementations of FP and ECFP \cite{swenson2012ECFP} where the update of the empirical distribution estimate in step (iii) is carried out using a type of (synchronous) consensus recursion \cite{dimakis2010gossip}. We note, however, that the convergence results for the distributed algorithms in \cite{swenson2012ECFP} relies on an alternative form of the robustness property which required strong assumptions. In particular, it was required that error in players estimates decay as $\|\hat z^i(n) - z(n)\|=O(\frac{\log{t}}{t^r}),~r>0$.


The robustness result in this paper relies on the significantly weaker assumption that $\|\hat z^i(n) - z(n)\| \rightarrow 0$ (cf. A. \ref{a_BR_Decay}); in particular, the rate at which this goes to zero does not matter.

The protocol used to form the estimate $\hat z^i(n)$ in step (iv) is intentionally crafted to be broad in order to emphasize that a wide variety of information dissemination protocols may be used. Using the more powerful robustness result of this paper one may extend the approach of \cite{swenson2012ECFP}, demonstrating convergence of distributed implementations of FP-type algorithms in settings where players use more realistic communication protocols---e.g., asynchronous gossip \cite{dimakis2010gossip} (cf., Section \ref{sec_async_implementation}), a communication framework in which the communication graph suffers from random link dropouts \cite{kar2009distributed}, or otherwise changing topology \cite{ren2005consensus}.

\section{Application: Asynchronous Implementation of Fictitious Play}\label{sec_async_implementation}
The classical FP algorithm \eqref{def_FP_process} implicitly assumes a form of global synchronization. In particular, note that each agent must choose their stage $n$ action before any other agent chooses their stage $(n+1)$ action. In practice, such synchronization is often infeasible in large-scale distributed systems.

In this section we use the robustness result to study a variant of FP in which agents are permitted to act in an asynchronous manner. While asynchronous learning schemes would usually be analysed using asynchronous stochastic approximation (e.g. \cite{perkins2012asynchronous}) we show in this section that asynchronicity can be handled in a more straightforward manner by simply using our robustness results.
In particular, using Theorem \ref{thrm_conv_to_CTS} we develop a mild sufficient condition under which an ``asynchronous FP process'' can be shown to converge to the set of NE in the same sense as classical FP.

The initial model of asynchronous FP that we study in Section \ref{sec_async_FP} is somewhat abstract---it is this feature that allows us to capture a broad range of asynchronous processes. After introducing this model and proving convergence results (Section \ref{sec_async_FP}), we then provide simple examples of highly practical real world models that readily fall within this framework (Sections \ref{sec_CT_implementation}-\ref{sec_adaptive_clocks}).

We begin by introducing the notion of asynchronous repeated play learning---a slight modification of classical repeated play introduced in Section \ref{sec_repeated_play}.

\subsection{Asynchronous Repeated Play Learning}\label{sec_async_RP}
In order to model asynchrony, we consider an extension of the classical repeated play framework of Section \ref{sec_repeated_play} in which players may be ``active'' in some rounds and ``idle'' in others.

Let $n\in \naturals$, and let $\{X_i(n)\}_{n\geq 1}$, be a sequence of (deterministic or random) variables $X_i(n) \in \{0,1\}$ indicating the rounds in which player $i$ is active. Let $N_i(n)$ count the number of rounds in which player $i$ has been active up to and including time $n$; i.e., $N_i(n) := \sum_{s=1}^n X_i(s).$
Let $\sigma_i(n)$ represent the strategy chosen by player $i$ in round $n$. Let the empirical distribution of player $i$ be defined in this setting as $q_i(n) := \frac{1}{N_i(n)}\sum_{s=1}^n \sigma_i(s)X_i(s).$

\subsection{Fictitious Play with Asynchronous Updates} \label{sec_async_FP}
Within the generalized repeated-play framework given above, we say a sequence of strategies $\{\sigma(n)\}_{n\geq 1}$ is a \emph{FP process with asynchronous updates} (or asynchronous FP process) if for $n\geq 1$,\footnote{Let $X_i(1) = 1, ~\forall i$ and let the initial action $\sigma_i(1)$ be chosen arbitrarily for all $i$. Moreover, for convenience in notation we have used an inclusion in \eqref{eq_async_FP_proccess}. However, if $X_i(n+1)\not= 1$, then the inclusion should be interpreted as an equality: $\sigma_i(n+1) = \sigma_i(n)$.}
\begin{equation} \label{eq_async_FP_proccess}
\sigma_i(n+1) \in
\begin{cases}
BR_i(q_{-i}(n))
& \mbox{ if } X_i(n+1) = 1,\\
\sigma_i(n) & \mbox{ otherwise}.
\end{cases}
\end{equation}
This models a scenario in which each player $i$ may update her action in round $(n+1)$ according to traditional best-response dynamics only if $X_i(n+1)=1$; otherwise, the action of player $i$ persists from the previous round.
\footnote{Note that classical FP of Section \ref{sec_FP} may be seen as a special case within this framework with $X_i(n) = 1,~\forall i,n$.}

As a consequence of Corollary \ref{cor_FP_robustness}, the following assumption is sufficient (to be shown) to ensure that the FP process defined in \eqref{eq_async_FP_proccess} leads to NE learning in potential games:
\begin{assumption}
\label{a_synchrony}
(i) For each $i$ there holds $\lim_{n\rightarrow\infty} N_i(n)=\infty$; (ii) for all $i,j$ there holds, $\lim_{n\rightarrow\infty} \frac{N_i(n)}{N_j(n)} = 1$.
\end{assumption}
Part (i) in the above assumption ensures that players are active in infinitely many rounds. Part (ii) ensures that the number of actions taken by each player remain relatively close; effectively (ii) ensures that players obtain a weak form of synchronization.

The following theorem is the main theoretical result of this Section. It shows that under the above assumption, FP with asynchronous updates achieves NE learning. It will be shown to follow as a consequence of the robustness result.
\begin{theorem}\label{thrm_async_FP}
Let $\Gamma$ be a potential game. Let the action sequence $\{\sigma(n)\}_{n\geq 1}$ be determined according to a FP process with asynchronous updates and assume A. \ref{a_synchrony} holds. Then players learn NE strategies in the sense that $\lim_{n\rightarrow \infty} d(q(n),NE) = 0$.
\end{theorem}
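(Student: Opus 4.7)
The plan is to recast the asynchronous FP process as a weakened FP-type process and invoke the robustness machinery from Corollary \ref{cor_FP_robustness} (via Theorem \ref{thrm_conv_to_CTS}). Two ingredients are needed.

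First, I will show that $\sigma_i(n+1)\in BR_{i,\epsilon_n}(q_{-i}(n))$ for every $i$ and $n$, with $\epsilon_n\to 0$. For active rounds ($X_i(n+1)=1$) this is the defining property. For idle rounds, $\sigma_i(n+1)=\sigma_i(\tau_i)$ where $\tau_i$ is player $i$'s most recent active round, and $\sigma_i(\tau_i)\in BR_i(q_{-i}(\tau_i-1))$. The key observation is that $N_i(n)=N_i(\tau_i)$, since player $i$ is idle on $(\tau_i,n]$; combining this with A.\ref{a_synchrony}(ii) forces $N_j(\tau_i)/N_j(n)\to 1$ for every $j\neq i$, from which a direct computation gives $\|q_{-i}(n)-q_{-i}(\tau_i-1)\|\to 0$. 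Lipschitz continuity of $U_i$ converts this estimate into the desired uniform $\epsilon_n$.

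Second, I will reconcile the asynchronous update step size $X_i(n+1)/N_i(n+1)$ (player-dependent) with the uniform $1/(n+1)$ of classical FP. Passing to a subsequence $\{n_k\}$ whose block lengths are chosen---using A.\ref{a_synchrony}(ii)---so that each player is active approximately the same number of times within each block, the block-cumulative update becomes
\[
q_i(n_{k+1})-q_i(n_k) \;=\; \gamma_k\bigl(\bar\sigma_i^{(k)}-q_i(n_k)\bigr)+o(\gamma_k),
\]
with a common step size $\gamma_k\to 0$ satisfying $\sum_k\gamma_k=\infty$ and $\bar\sigma_i^{(k)}$ the block-average of player $i$'s active-round actions. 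Convexity of the $\epsilon$-best response set, together with the first step and the slow variation of $q$ within a block, gives $\bar\sigma_i^{(k)}\in BR_{i,\tilde\epsilon_k}(q_{-i}(n_k))$ with $\tilde\epsilon_k\to 0$. Thus $\{q(n_k)\}$ is a weakened FP-type process with observation state $z(k)=q(n_k)$, so Theorem \ref{thrm_conv_to_CTS}---together with the fact that the chain recurrent set of the FP differential inclusion is contained in NE for potential games---yields $d(q(n_k),NE)\to 0$. The bound $\|q(n)-q(n_k)\|=O(\gamma_k)$ for $n\in(n_k,n_{k+1}]$ then lifts the conclusion to the full sequence.

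The main obstacle is the block construction in the second step: the block lengths must be short enough that $q$ is nearly constant within a block, yet long enough that the per-block asymmetry between the counts $N_i(n_{k+1})-N_i(n_k)$ is asymptotically negligible relative to $\gamma_k$. Since A.\ref{a_synchrony}(ii) provides no rate, the construction must adapt to the actual rate at which $N_i/N_j\to 1$; concretely, taking $\gamma_k\asymp\sqrt{\epsilon_{n_k}}$ along a polynomial schedule for $N_1(n_k)$, where $\epsilon_n:=\max_{i,j}|N_i(n)/N_j(n)-1|$, is a natural choice that balances the competing demands of slow variation of $q$, divergence of $\sum_k\gamma_k$, and cross-player uniformity of the effective block step.
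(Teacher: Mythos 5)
Your overall strategy---reduce to the robustness result---is the right one, and your first step (every-round $\epsilon$-best responses via $N_i(n)=N_i(\tau_i)$ and A.~\ref{a_synchrony}(ii), then Lipschitz continuity of $U_i$) is sound. But the proof has a genuine gap exactly where you flag it: the block construction in the second step is asserted, not carried out, and it is the load-bearing part of your argument. To make the blocked increments $q_i(n_{k+1})-q_i(n_k)$ share a common step size $\gamma_k$ up to $o(\gamma_k)$, you need the per-block activation counts to satisfy $|m_i^{(k)}-m_j^{(k)}|=o(m^{(k)})$, whereas A.~\ref{a_synchrony}(ii) only gives $|N_i(n)-N_j(n)|=o(N_i(n))$ with no rate; the discrepancy accumulated over a block is controlled by the \emph{total} counts at the block endpoints, not by the block length. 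So the blocks must be long enough that $m^{(k)}$ dominates an $o(N_i(n_{k+1}))$ quantity whose rate is unknown, yet short enough that $\gamma_k\approx m^{(k)}/N_i(n_{k+1})\to 0$, while still ensuring $\sum_k\gamma_k=\infty$ and that $q_{-i}$ varies by $o(1)$ within a block. These constraints can be reconciled (e.g., choosing $n_{k+1}$ adaptively so that $m^{(k)}$ is the geometric mean of $N_i(n_k)$ and the realized discrepancy bound), but this requires a realization-dependent construction and a verification of all four properties simultaneously; the heuristic $\gamma_k\asymp\sqrt{\epsilon_{n_k}}$ does not constitute that verification. There is also a secondary mismatch: Theorem~\ref{thrm_conv_to_CTS} is stated for a predefined deterministic step sequence and for exact inclusions of the form \eqref{eq_empirical_dist_recursion}, so you would additionally need to argue that a random, adapted $\gamma_k$ and an extra additive $o(\gamma_k)$ error can be absorbed into the $\delta$-perturbation framework of Proposition~\ref{prop_delta_perturbation}.

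The paper avoids all of this by a different reduction: instead of blocking the original time index, it reindexes each player by their own activation count, setting $\tilde\sigma_i(s)=\sigma_i(\tau_i(s))$ and $\tilde q_i(s)=q_i(\tau_i(s))$. In the index $s$ the empirical distribution is an average of exactly $s$ actions, so the recursion automatically has the common step size $1/(s+1)$ and no block construction is needed. The only thing left to check is that the quantity player $i$ actually best-responds to, $q_{-i}(\tau_i(s+1)-1)$, converges to the common state $\tilde q_{-i}(s)$; that is Lemma~\ref{IR1}, a short computation from A.~\ref{a_synchrony} of the same flavor as your first step. Corollary~\ref{cor_FP_robustness} then gives $d(\tilde q(s),NE)\to 0$, and a final argument (active rounds map $q(n)$ onto $\tilde q(N_i(n))$; idle rounds leave $q$ unchanged) transfers the conclusion back to $q(n)$. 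I would recommend restructuring your proof around this reindexing: it converts the hardest part of your plan into a one-line identity.
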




In order to prove Theorem \ref{thrm_async_FP} we will study an underlying (synchronous) FP process that is embedded in the asynchronous FP process defined in \eqref{eq_async_FP_proccess}. We begin by presenting some additional definitions that allow us to study the embedded process.

In particular, for $s\in \mathbb{N}_{+}$ define the following terms:

$\tau_i(s) := \sup\{n\in\mathbb{N}_+:~ N_i(n)\leq s \},$
$\tilde \sigma_i(s) := \sigma_i(\tau_i(s)),$
$\tilde \sigma(s) := (\tilde \sigma_1(s),\ldots,\tilde \sigma_{N}(s)),$
$\tilde q_i(s) := q_i(\tau_i(s)),$
$\tilde q(s) := (\tilde q_1(s),\ldots,\tilde q_{N}(s)),$
$\hat  q^i_j(s) := q_j(\tau_i(s+1)-1),$
$\hat q^i(s) := (\hat q^i_1(s),\ldots, \hat q^i_{N}(s))$.

In words, the term $\tau_i(s)$ denotes the round number when player $i$ is active for the $s$-th time.
The terms marked with a $\sim$ correspond to the embedded (synchronous) FP process that we will study in the proof of Theorem \ref{thrm_async_FP}.

When studying the embedded (synchronous) FP process $\{\tilde \sigma(s)\}_{s\geq 1}$, it will be important to characterize the terms to which players are best responding. With this in mind, note that per \eqref{eq_async_FP_proccess}, the action at time $\tau_i(s+1)$ is chosen as $\sigma_i(\tau_i(s+1)) \in \arg\max_{\alpha_i\in A_i}U_i(\alpha_i,q_{-i}(\tau_i(s+1)-1))$.
Thus, by construction, the $(s+1)$-th action of player $i$ in the embedded (synchronous) FP process is chosen as $\tilde \sigma_i(s+1) \in BR_i(\hat q^i_{-i}(s)).$
In the embedded (synchronous) FP process, the term $\tilde q_j(s)$ may be thought of as the ``true'' empirical distribution of player $j$, and the term $\hat q_j^i(s)$ may be thought of as an estimate which player $i$ maintains of $\tilde q_j(s)$, and the term $\hat q^i(s)$ (note the superscript) may be thought of as player $i$'s estimate of the joint empirical distribution $\tilde q(s)$ at the time of player $i$'s $(s+1)$-th best response. Loosely speaking, if we can show that $\hat q^i(s)\rightarrow \tilde q(s),~\forall i$, then convergence of the embedded process $(\tilde q(s))$ (and eventually the original process $(q(n))$) will follow from the robustness result.

Before proceeding to the proof of Theorem \ref{thrm_async_FP}, we point out a few useful properties that will arise in the proof.
Note that for $i\in \mathcal{N}$ and $s\in \{1,2,\ldots\}$, we have
\begin{equation}
\label{ell_tau_eq}
N_i(\tau_i(s)) = s,
\end{equation}
\noindent
and for $i\in \mathcal{N}$ and $t\in \{1,2,\ldots\}$ we have
\begin{equation}
\label{X_i_implication}
X_i(n) = 1 \implies \tau_i(N_i(n)) = n.
\end{equation}
\noindent Furthermore, note that $X_i(n) = 0$ implies that $N_i(n) = N_i(n-1)$, and in particular,
\begin{align}
\label{q_step_equality}
X_i(n) = 0 \implies q_i(n) = q_i(n-1).
\end{align}
\noindent These facts are readily verified by conferring with the definitions of $\tau_i$, $N_i$, and $X_i$.

We now prove Theorem \ref{thrm_async_FP}.
\begin{proof}
As a first step, we wish to show that $\lim_{s\rightarrow\infty} d(\tilde q(s),~NE) = 0$. We accomplish this by invoking the robustness result. In particular, we wish to show that there exists a sequence $\{\epsilon_s\}_{s\geq 1}$ such that $\lim_{s\rightarrow\infty}\epsilon_s = 0$ and
\begin{equation} \label{thrm1_eq7}
U_i(\sigma_i(s+1),\tilde q_{-i}(s)) \geq \max_{y_i \in Y_i} U_i(\alpha_i,\tilde q_{-i}(s)) - \epsilon_s, ~\forall s\geq 1.
\end{equation}
To that end, for $i\in \mathbb{N}$ define $v_i:\Delta_{-i}\rightarrow \mathbb{R}$ by $v(q_{-i}) := \max_{y_i \in Y_i} U_i(\alpha_i,q_{-i})$, and note that by \eqref{eq_async_FP_proccess}, $U_i(\sigma_i(\tau_i(s+1)),q_{-i}(\tau_i(s+1)-1)) = v_i(q_{-i}(\tau_i(s+1)-1))$, or equivalently by the definitions of $\tilde \sigma(s)$ and $\hat q^i(s)$,
\begin{equation}
U_i(\tilde \sigma_i(s+1)),\hat q^i_{-i}(s)) = v_i(\hat q^i_{-i}(s)).
\label{thrm1_eq1}
\end{equation}
\noindent Using Lemma \ref{IR1} in the appendix,
it is straightforward to verify that $\lim_{s\rightarrow\infty} \|\hat q^i(s) - \tilde q(s)\| = 0$. Since $U_i$ is Lipschitz continuous, this gives
$\lim\limits_{s\rightarrow\infty} |U_i(\tilde \sigma_i(s+1)),\tilde q_{-i}(s)) - v_i(\tilde q_{-i}(s))| = 0, ~\forall i;$
i.e., there exists a sequence $\{\epsilon_s\}_{s\geq 1}$ such that $\epsilon_s \rightarrow 0$ and \eqref{thrm1_eq7} holds. It follows by Corollary \ref{cor_FP_robustness} that
\begin{equation}
\lim\limits_{s\rightarrow\infty} d(\tilde q(s),~NE) = 0.
\label{theorem_main_result_eq3}
\end{equation}

We now show that $\lim_{n\rightarrow\infty} d(q(n),~NE) =0$. Let $\varepsilon > 0$ be given. By Lemma \ref{IR1} (see appendix), for each $i\in \mathcal{N}$ there exists a time $S_i > 0$ such that $\forall s\geq S_i$, $\|q(\tau_i(s)) - \tilde q(s)\| < \frac{\varepsilon}{2}$. Let $S^{'} = \max_i\{S_i\}$. By \eqref{theorem_main_result_eq3} there exists a time $S^{''}$ such that $\forall s\geq S^{''}$, $d(\tilde q(s), ~NE) < \frac{\varepsilon}{2}$. Let $S=\max\{S^{'},S^{''}\}$. Then
\begin{equation}
d(q(\tau_i(s)),~NE) < \varepsilon, ~\forall i,~ \forall s\geq S.
\label{thrm1_eq6}
\end{equation}
\noindent Let $T = \max_{i}\{\tau_i(S)\}$. Note that for some $i$, $q(T) = q(\tau_i(S))$, and hence by \eqref{thrm1_eq6},
\begin{equation}
d(q(T),~NE) < \varepsilon.
\label{thrm1_eq4}
\end{equation}
\noindent Also note that for any $n_0>T$, it holds that $N_i(n_0) \geq S$ (since $N_i(\tau_i(S)) = S$, and $N_i(n)$ is non-decreasing in $n$), and moreover
\begin{align}
& X_i(n_0) = 1 \mbox{ for some } i ~\implies ~q(n_0) = q(\tau_i(N_i(n_0))),\\
& X_i(n_0) = 0 \mbox{ for all $i$ } ~\implies ~q(n_0) = q(n_0-1),
\label{thrm1_eq5}
\end{align}
\noindent where the first implication holds with $N_i(n_0) \geq S$. In the above, the first line follows from \eqref{X_i_implication}, and the second line follows from \eqref{q_step_equality}.
Consider $n\geq T$. If for some $i$, $X_i(n) = 1$, then by \eqref{thrm1_eq5} and \eqref{thrm1_eq6},
$d(q(n),~NE) = d(q(\tau_i(N_i(n))),~NE) < \varepsilon.$
Otherwise, if $X_i(n) = 0 ~\forall i$, then $q(n) = q(n-1)$.

Iterate this argument $m$ times until either (i) $X_i(n-m) = 1$ for some $i$, or (ii), $t-m = T$. In the case of (i),
$d(q(n),~NE) = d(q(n-m),~NE) = d(q(\tau_i(N_i(n-m))),~NE) < \varepsilon,$
where the inequality again
follows from \eqref{thrm1_eq6} and the fact that $n-m>T \implies N_i(n-m)\geq S$.
In the case of (ii),
$d(q(n),~NE) = d(q(T),~NE) < \varepsilon,$
where the inequality  follows from \eqref{thrm1_eq4}. Since $\varepsilon >0$ was arbitrarily, the result follows.
\end{proof}

\subsection{Continuous-Time Embedding of Fictitious Play} \label{sec_CT_implementation}
The asynchronous FP algorithm discussed in Section \ref{sec_async_FP} is a somewhat abstract discrete-time process. In this section we give a concrete interpretation of the process within a practical setting.
In particular, we consider the implementation of the (discrete-time) FP algorithm in a continuous-time setting where agents do not have access to a global clock. Effectively, this results in a discrete-time asynchronous FP process embedded within a continuous-time framework.

We first introduce the continuous-time embedding and derive a sufficient condition for convergence using Theorem \ref{thrm_async_FP}. Subsequently, we give two simple and practical implementations that achieve the condition.
The example implementations are prototypical in that one uses a synchronization rule that is entirely stochastic, and the other, entirely deterministic.

As in the the previous models of repeated play learning, assume each player executes a (countable) sequence of actions (or strategies) $\{\sigma_i(n)\}_{n\geq 1}$. Furthermore, assume that each action is taken at some instant in real time $t\in[0,\infty)$ as measured by some universal clock.\footnote{We use the term ``universal clock'' to refer to some reference clock by which we can compare the timing of actions taken by individual players. However, the universal clock is merely an artifice for analyzing the process, and we do not suppose that players have any particular knowledge concerning it.}
In particular, for each player $i$, let $\{\tau_i(n)\}_{n=1}^\infty \subset [0,\infty)$ be an increasing sequence where $\tau_i(n)$ indicates the time (as measured by the universal clock) at which player $i$ chooses an action for the $n$-th time. Let $\sigma_i(n)$ denote the $n$-th action taken by player $i$; i.e., the action taken by player $i$ at time $t=\tau_i(n)$. For $t\in[0,\infty)$, let $N_i(t)=\sup\{n:~\tau_i(n) \leq t\}$ denote the number of actions taken by player $i$ by time $t$. For $t\in[0,\infty)$, we define the empirical distribution of player $i$ in this settings as
$q_i(t) := \frac{1}{N_i(t)}\sum_{k=1}^{N_i(t)} \sigma_i(k).$
In particular, for $t\in[0,\infty)$, let $q_i(t_{-}) := \lim_{\tilde t \uparrow t} q_i(\tilde t)$.
%
%

In this context, we say the sequence $\{\sigma_i(n)\}_{n\geq 1}$ is an asynchronous FP action process if for $n\geq 1$ each player $i$ chooses their stage-$n$ action according to the rule:\footnote{Let $\tau_i(1) = 0$ for all $i$, and let the initial action $\sigma_i(1)$ be chosen arbitrarily for all $i$.}
\begin{align}
\sigma_i(n) \in BR_i(q_{-i}(\tau_i(n)_{-}))
\end{align}
We call the sequence $\{\tau_i(n)\}_{n\geq 1}$ the action-timing process for player $i$, and we refer to any method used to generate $\{\tau_i(n)\}_{n\geq 1}$ (whether deterministic or stochastic) as an action timing rule. Together, we refer to the joint sequence $\{\tau_i(n),\sigma_i(n)\}_{i\in\mathcal{N},n\geq 1}$ as a continuous-time embedded FP process.

The following assumption provides a sufficient condition on the action-timing process in order to ensure convergence of the continuous-time embedded FP process. The assumption is essentially a restatement of A. \ref{a_synchrony}, but in a continuous-time setting.
\begin{assumption} \label{a_synchrony_CT}
(i) For each $i$ there holds $\lim_{t\rightarrow\infty} N_i(t) = \infty$, (ii) for each $i,j$ there holds $\lim_{t\rightarrow\infty} N_i(t)/N_j(t)=1$.
\end{assumption}
Part (i) of the above assumption may be satisfied, for instance, as long as the clock skew of each agent stays bounded (with respect to the universal clock), and each agent takes actions infinitely often with respect to their local clock. In order to ensure (ii) is satisfied, slightly more care is needed, as demonstrated by the specific application scenarios below.

The following theorem demonstrates that if the action-timing sequence is chosen to satisfy A. \ref{a_synchrony_CT}, then the continuous-time embedding of FP will converge to the set of NE.
\begin{theorem} \label{thrm_CT_FP}
Let $\Gamma$ be a potential game. Suppose that $\{\sigma_i(n),\tau_i(n)\}_{i\in \mathcal{N},~n\geq 1}$ is a continuous-time embedding of FP satisfying A. \ref{a_synchrony_CT}. Then players learn NE strategies in the sense that $\lim{t\rightarrow\infty} d(q(t),NE)=0$.
\end{theorem}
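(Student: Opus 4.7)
The plan is to reduce the continuous-time embedded FP process to a discrete-time asynchronous FP process in the sense of Section \ref{sec_async_FP}, and then invoke Theorem \ref{thrm_async_FP}. Because only countably many update events occur over $[0,\infty)$, we may enumerate them in increasing time order and reinterpret the continuous-time process as a discrete asynchronous one, so the result reduces to the discrete case already handled.

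To carry out the reduction, I would first construct the discrete embedding. Let $\{t_n\}_{n\geq 1}$ be the strictly increasing enumeration of the distinct times in $\bigcup_{i\in\mathcal{N}} \{\tau_i(k):k\geq 1\}$ (simultaneous cross-player updates being collapsed into a single round; note $\tau_i(1)=0$ gives $t_1=0$). Define the activity indicator $X_i(n)=1$ when $\tau_i(k) = t_n$ for some $k$ and $X_i(n)=0$ otherwise; set the round-$n$ action to $\hat\sigma_i(n) := \sigma_i(N_i(t_n))$ when $X_i(n)=1$ and $\hat\sigma_i(n) := \hat\sigma_i(n-1)$ otherwise; and let $\hat q_i(n)$ denote the associated discrete empirical distribution as in Section \ref{sec_async_RP}. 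Since the continuous-time empirical $q_{-i}(\cdot)$ is piecewise constant between events, one checks that $q_{-i}(\tau_i(k)^-) = \hat q_{-i}(n-1)$ at the round $n$ with $\tau_i(k) = t_n$; thus whenever $X_i(n)=1$ we have $\hat\sigma_i(n) \in BR_i(\hat q_{-i}(n-1))$, so $\{\hat\sigma(n)\}$ is a bona fide asynchronous FP process in the sense of \eqref{eq_async_FP_proccess}.

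Second, I would verify A.\ref{a_synchrony} in the embedded process. Writing $\hat N_i(n) := \sum_{s=1}^n X_i(s)$, the construction yields $\hat N_i(n) = N_i(t_n)$. Since each $\{\tau_i(k)\}_k$ is strictly increasing and $N_i(t)$ is finite for each finite $t$ (implicit in the definition of $q_i(t)$), A.\ref{a_synchrony_CT}(i) forces $t_n\to\infty$; hence $\hat N_i(n)\to\infty$ and the ratio $\hat N_i(n)/\hat N_j(n)=N_i(t_n)/N_j(t_n)\to 1$ follows from A.\ref{a_synchrony_CT}(ii). Theorem \ref{thrm_async_FP} then delivers $\lim_{n\to\infty} d(\hat q(n), NE) = 0$. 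Translating back is immediate: since $q(t)=\hat q(n)$ for $t \in [t_n, t_{n+1})$ and $t_n\to\infty$, we conclude $\lim_{t\to\infty} d(q(t), NE) = 0$.

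The main obstacle is the careful bookkeeping at the interface between the two time scales---especially verifying that $q_{-i}(\tau_i(n)^-)$ in the continuous-time process is literally the pre-round empirical in the discrete embedding, and that $t_n\to\infty$ (which uses the absence of event accumulation at finite times). Once these alignments are in place, the theorem is a direct corollary of the discrete-time asynchronous result, requiring no new stochastic approximation machinery beyond what has already been developed.
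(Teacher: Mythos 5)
Your proof is correct and takes essentially the same approach as the paper: the paper's entire proof is the one-line remark that Theorem \ref{thrm_CT_FP} ``follows readily from Theorem \ref{thrm_async_FP},'' and your reduction---enumerating the event times, constructing the embedded discrete asynchronous process, verifying A.~\ref{a_synchrony} from A.~\ref{a_synchrony_CT}, and translating back via the piecewise constancy of $q(\cdot)$---is precisely the argument being invoked. You have in fact supplied more detail than the paper itself provides.
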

The proof of Theorem \ref{thrm_CT_FP} follows readily from Theorem \ref{thrm_async_FP}.

In the following two subsections, we give two simple examples of action-timing rules that illustrate different methods for achieving A. \ref{a_synchrony_CT} (and hence achieving NE learning in the continuous-time embedded FP process).

\subsection{Independent Poisson Clocks} \label{sec_poisson_clocks}
Let $w_i(n) = \tau_i(n+1) - \tau_i(n)$ denote the stage $n$ ``waiting time'' for player $i$.
Suppose that for each player $i$ and $n\geq 1$, $w_i(n)$ is an independent random variable with distribution $w_i(n) \sim exp(\lambda)$, where $\lambda>0$ is some parameter that is common among all $i$. In this case, the action-timing process $\{\tau_i(n)\}_{n\geq 1}$ is said to be a homogenous Poisson process.

The following theorem shows that if the action-timing process is randomly generated in this manner, then players will achieve NE learning.
\begin{theorem}
Let $\Gamma$ be potential game. Suppose that players are engaged in a continuous-time embedded asynchronous FP process and the action-timing sequences $\{\tau_i(n)\}_{n\geq 1}$ are generated as independent homogenous Poisson processes with common parameter $\lambda$. Then players learn NE strategies in the sense that $\lim\limits_{t\rightarrow\infty} d(q(t),NE) = 0$, almost surely.
\end{theorem}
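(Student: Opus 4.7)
The plan is to reduce the statement to Theorem \ref{thrm_CT_FP} by verifying, almost surely, that the independent homogeneous Poisson action-timing processes satisfy A. \ref{a_synchrony_CT}. Since the assumed action-selection rule is exactly the continuous-time embedded FP rule, once A. \ref{a_synchrony_CT} holds almost surely, Theorem \ref{thrm_CT_FP} applied pathwise yields $d(q(t),NE)\to 0$ almost surely.

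First I would verify part (i) of A. \ref{a_synchrony_CT}. For each player $i$, the waiting times $\{w_i(n)\}_{n\geq 1}$ are i.i.d.\ $\mathrm{exp}(\lambda)$ with finite mean $1/\lambda$, so by the strong law of large numbers $\tau_i(n)/n = n^{-1}\sum_{k=1}^n w_i(k) \to 1/\lambda$ almost surely. In particular $\tau_i(n)\to\infty$ almost surely, so $N_i(t)=\sup\{n:\tau_i(n)\leq t\}\to\infty$ as $t\to\infty$ almost surely. Taking the intersection over the finitely many players $i\in\mathcal{N}$ still gives a probability-one event on which (i) holds for all $i$.

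Next I would verify part (ii). A standard renewal/Poisson argument (e.g.\ the elementary renewal theorem, or directly from $\tau_i(N_i(t))\leq t<\tau_i(N_i(t)+1)$ together with $\tau_i(n)/n\to 1/\lambda$) gives $N_i(t)/t\to \lambda$ almost surely as $t\to\infty$, for each $i$. Intersecting the corresponding probability-one events over $i$ and $j$ yields, almost surely,
\begin{equation}
\lim_{t\to\infty}\frac{N_i(t)}{N_j(t)} = \lim_{t\to\infty}\frac{N_i(t)/t}{N_j(t)/t} = \frac{\lambda}{\lambda}=1,
\end{equation}
which is precisely (ii). Note that no independence between players is needed for this step (only the SLLN for each individual Poisson process), though independence is available and would simplify any probabilistic bookkeeping.

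Having established that A. \ref{a_synchrony_CT} holds on an event of probability one, on that event the hypotheses of Theorem \ref{thrm_CT_FP} are met and we conclude $\lim_{t\to\infty} d(q(t),NE)=0$ almost surely. I do not anticipate a serious obstacle here: the entire argument is a pathwise reduction to Theorem \ref{thrm_CT_FP}, and the only analytic content beyond that theorem is the classical SLLN for Poisson processes. If any subtlety arises, it would be in making sure that the ``almost sure'' event on which A. \ref{a_synchrony_CT} holds is independent of the realization of the players' best-response choices---but since the action selection in \eqref{eq_async_FP_proccess} is a measurable function of the timing process and any best-response tie-breaking, Theorem \ref{thrm_CT_FP} applies trajectory by trajectory on that event.
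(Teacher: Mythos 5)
Your proposal is correct, and the reduction to Theorem \ref{thrm_CT_FP} (equivalently, to verifying A.~\ref{a_synchrony_CT} pathwise and invoking Theorem \ref{thrm_async_FP}) is exactly the reduction the paper makes. Where you diverge is in how you verify part (ii) of A.~\ref{a_synchrony_CT}. The paper merges the $N$ Poisson processes into a single ordered event sequence $\{\tau(n)\}$, observes that by independence and memorylessness the conditional probability that the $n$-th event belongs to player $i$ is $1/N$, and then applies L\'evy's extension of the Borel--Cantelli lemma to conclude $N_i(\tau(n))/(n/N)\to 1$ a.s., whence $N_i(t)/N_j(t)\to 1$. You instead apply the strong law of large numbers to each player's i.i.d.\ waiting times to get $\tau_i(n)/n\to 1/\lambda$ a.s., deduce $N_i(t)/t\to\lambda$ by the standard renewal sandwich $\tau_i(N_i(t))\le t<\tau_i(N_i(t)+1)$, and take the ratio. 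Both arguments are sound; yours is more elementary (no conditional Borel--Cantelli machinery) and, as you note, does not use independence across players---indeed it extends verbatim to arbitrary renewal processes whose waiting times have a common finite mean, whereas the paper's merged-process argument leans on the exponential/memoryless structure to compute $\xi_i(n)=1/N$. The paper's route, on the other hand, gives the slightly stronger intermediate fact that the proportion of events belonging to each player tends to $1/N$ along the merged event sequence. Either way the conclusion follows by applying the deterministic convergence theorem on the probability-one event where A.~\ref{a_synchrony_CT} holds, as you do.
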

\begin{proof}
By Theorem \ref{thrm_async_FP} it is sufficient to show that $\lim_{t\rightarrow\infty} N_i(t) = \infty,~\forall i$, and $\lim_{t\rightarrow\infty}\frac{N_i(t)}{N_j(t)}=1$ for all $i,j$.

First, note that for any $i$ and $n\geq 1$, $w_i(n) < \infty$ almost surely. Hence, $\tau_i(n) = \sum_{k=1}^n w_i(k) <\infty$ for all $i$, almost surely. Equivalently, for any $M>0$, almost surely there exists a (random) time $T>0$ such that $N_i(t) \geq M$ for all $t\geq T$. Hence, $\lim_{t\rightarrow\infty} N_i(t) = \infty$, almost surely.

Now we show that $\lim_{t\rightarrow\infty}\frac{N_i(t)}{N_j(t)}=1$ for all $i,j$.
Let $\tau(1) := \min_i \tau_i(1)$ and let $\mathcal{T}_1 := \{\tau_i(n)\}_{i\in\mathcal{N},n\geq 1}\backslash \tau(1)$. For $n\geq 2$, let $\tau(n) := \min \mathcal{T}_{n-1}$ and let $\mathcal{T}_n := \mathcal{T}_{n-1}\backslash\tau(n)$. In this manner, we produce the sequence $\{\tau(n)\}$.
For $n\geq 1$, $i\in \mathcal{N}$, define $X_i(n) \in\{0,1\}$ to be an indicator variable with $X_i(n) = 1$ if $\tau(n) \in \{\tau_i(k)\}_{k\geq 1}$ and $X_i(n) = 0$ otherwise.

Let $\mathcal{F}_0 := \emptyset$ and for $n\geq 1$, let $\mathcal{F}_n := \sigma(\{\tau(k)\}_{k=1}^n )$. For $n\geq 1$ let $\xi_{i}(n) := \prob(X_i(n) = 1|~\mathcal{F}_{n-1})$.

Since for each $i$, $\{\tau_i(n)\}_{n\geq 1}$ is a Poisson process with common parameter $\lambda$, there holds $\xi_{i}(n) = \frac{1}{N}$ for all $i$ and $n$.\footnote{Recall that $N$ denotes the number of players.}
By Levi's extension of the Borel-Cantelli Lemma (see \cite{williams_book}, p.124) there holds
\begin{equation} \label{eq_Levi_BC}
\lim_{n\rightarrow\infty} (\sum_{k=1}^n X_i(n))/(\sum_{k=1}^n \xi_{i}(n)) = 1, \mbox{ a.s.}
\end{equation}
Note that for each $i$, $\sum_{k=1}^n X_i(k)=N_i(\tau(n))$ and $\sum_{k=1}^n \xi_i(n) = \frac{n}{N}$. Thus by \eqref{eq_Levi_BC},
$
\lim_{n\rightarrow\infty} \frac{N_i(\tau(n))}{N_j(\tau(n))} = \lim_{n\rightarrow\infty} \frac{N_i(\tau(n))}{n/N} \frac{n/N}{N_j(\tau(n))} = 1, \mbox{ a.s.},~\forall i,j.
$

Finally, note that $\lim_{n\rightarrow\infty}\tau(n) = \infty$ a.s., and for each $i$ $N_i(t)$ is constant on
\newline $[0,\infty)\backslash \{\tau(n)\}_{n\geq 1}$. Thus, $\lim_{t\rightarrow\infty} \frac{N_i(t)}{N_j(t)} = 1$, almost surely.
\end{proof}

\subsection{Adaptive Clock Rates} \label{sec_adaptive_clocks}
In this section we consider a scenario in which each player chooses the timing of her actions (deterministically) according to a personal clock with a skew rate that may be different among players.

Let $w_i(n) = \tau_i(n+1) - \tau_i(n)$ again denote the stage $n$ ``waiting time'' for player $i$. For each $i$, let $w_{i,0}$ denote a base waiting time for player $i$. The base waiting time of player $i$ may be interpreted as the amount of time which expires according to the universal clock during one unit of time as measured by player $i$'s personal clock. The disparity in the $w_{i,0}$ thus reflects disparate skew rates among players' personal clocks.

Let $N_{min}(t) := \min_{i} N_i(t)$. At time $t$, we suppose that player $i$ has knowledge of $N_{min}(s)$ at the time instances $s\in\{kw_{i,0}:~k\in \mathbb{N}_+, kw_{i,0}\leq t\}$. (I.e., player $i$ is aware of the value of $N_{min}$ at instances when her ``clock ticks''.) For each $i$, let $B_i\in\mathbb{R}$ be a number satisfying $B_i > \max_{i} w_{i,0}$.

Suppose that player $i$ adaptively chooses her stage $n$ waiting time according to the rule:
{\small
\begin{align} \label{eq_waiting_time}
w_i(n) =
\min \big\{kw_{i,0}:~ k\in \mathbb{N}_+, & ~ N_{min}(\tau_i(n) + kw_{i,0}) \geq N_i(\tau_i(n)) - B_i\big\}
\end{align}
}
In words, this rule may be described as follows: Player $i$ periodically observes $N_{min}(t)$. If $N_i(t) - N_{min}(t) \leq B_i$ then player $i$ takes a new action. If $N_i(t) - N_{min}(t) > B_i$ then player $i$ waits for $N_{min}(t)$ to increase sufficiently (satisfying $N_i(t) - N_{min}(t) \leq B_i$) before taking a new action.

\begin{theorem}
Let $\Gamma$ be a potential game. Suppose that players are engaged in a continuous-time embedded asynchronous FP process in which the action-timing sequence $\{\tau_i(n)\}_{n\geq 1}$ is generated according to the adaptive rule \eqref{eq_waiting_time}. Then players learn NE strategies in the sense that $\lim_{t\rightarrow\infty} d(q(t),NE)=0$.
\end{theorem}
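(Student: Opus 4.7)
The plan is to derive this as a corollary of Theorem \ref{thrm_CT_FP}, so the task reduces to verifying A. \ref{a_synchrony_CT} for the action-timing process generated by the adaptive rule \eqref{eq_waiting_time}. The entire argument pivots on a uniform bound for the action-count gap
\[
G(t) \;:=\; N_{\max}(t) - N_{\min}(t), \qquad N_{\max}(t) := \max_{i\in\mathcal{N}}N_i(t).
\]
Setting $B_{\max} := \max_{i} B_i$, I would establish $G(t) \leq B_{\max}+1$ for every $t \geq 0$.

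I would prove this gap bound by induction over action epochs, starting from $G(0) = 0$ (all players act at $t=0$, so $N_i(0)=1$ for each $i$). At each action epoch $t$ some player $i$ has $N_i(t) = N_i(t-)+1$ and all other counters are unchanged. A short case analysis shows the only scenario in which $G$ strictly increases is the one where $i$'s new counter makes her the unique strict maximum: in every other case either $N_{\max}$ is unchanged while $N_{\min}$ is non-decreasing, or $N_{\min}$ increases by one along with $N_i$. In the unique-new-max scenario, the adaptive rule \eqref{eq_waiting_time} guarantees that at her action time $t$ one has $N_{\min}(t-) \geq N_i(t-) - B_i$, so
\[
G(t) \;=\; N_i(t) - N_{\min}(t-) \;=\; (N_i(t-)+1) - N_{\min}(t-) \;\leq\; B_i + 1 \;\leq\; B_{\max} + 1.
\]
The bound therefore propagates and holds for all $t$.

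Given the gap bound, A. \ref{a_synchrony_CT}(i) is obtained by contradiction. If $N_i(t)$ were bounded for some $i$, then by the gap bound \emph{every} $N_j(t)$ would be bounded, so with finitely many players there would be a last action time $T<\infty$ after which every $N_j(t)$ is constant at some value $N_j^\star$. Pick $i^\star \in \arg\min_j N_j^\star$: because $i^\star$ never acts after $\tau_{i^\star}(N_{i^\star}^\star)$, we must have $w_{i^\star}(N_{i^\star}^\star) = \infty$, i.e., $N_{\min}(\tau_{i^\star}(N_{i^\star}^\star) + k w_{i^\star,0}) < N_{i^\star}^\star - B_{i^\star}$ for every $k \in \mathbb{N}_+$. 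But for $k$ large enough that $\tau_{i^\star}(N_{i^\star}^\star) + k w_{i^\star,0} > T$, the left-hand side equals $N_{i^\star}^\star$, yielding $0 < -B_{i^\star}$, contradicting $B_{i^\star} > 0$. Thus $N_i(t) \to \infty$ for every $i$. Part (ii) is then immediate: $|N_i(t) - N_j(t)| \leq B_{\max}+1$ while $N_j(t) \to \infty$, hence $N_i(t)/N_j(t) \to 1$. Theorem \ref{thrm_CT_FP} then delivers the conclusion.

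The main technical obstacle is the gap bound, specifically isolating the unique-new-max scenario as the only one in which $G$ can grow and then exploiting the waiting rule at that very instant. This is the sole place where the precise shape of \eqref{eq_waiting_time}---and in particular the role of the constants $B_i$---enters in a substantive way; the remainder of the argument is essentially book-keeping against the gap bound.
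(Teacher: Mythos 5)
Your proposal is correct and follows the same high-level strategy as the paper: reduce to verifying A.~\ref{a_synchrony_CT} (the paper invokes Theorem~\ref{thrm_async_FP} directly, which is equivalent) by establishing a uniform bound on the spread of the counters $N_i(t)$. The differences are in how the two ingredients are justified. The paper's proof is terse: it asserts the gap bound $|N_i(t)-N_{i^*}(t)|\leq 2\max_i B_i$ ``by construction,'' and it obtains divergence by claiming that the player $i^*$ with the largest base waiting time never waits, so that $N_{i^*}(t)=\lfloor t/w_{i^*,0}\rfloor+1$ explicitly. You instead prove the gap bound (with the sharper constant $B_{\max}+1$) by an epoch-by-epoch induction, and you obtain divergence by contradiction: if all counters froze, the player holding the minimal final count would eventually see $N_{\min}=N_{i^\star}^\star\geq N_{i^\star}^\star-B_{i^\star}$ at one of its ticks and would be forced to act. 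Your route is arguably more robust---the paper's claim that $i^*$ literally never waits requires its own argument about why the slowest-ticking player can never get more than $B_{i^*}$ ahead of the minimum, which the paper does not supply, whereas your contradiction argument sidesteps the need to identify any particular never-waiting player. A small simplification available to you: the gap bound follows directly without induction, since immediately after player $i$'s $(n+1)$-th action the rule forces $N_i-N_{\min}\leq B_i+1$, and between $i$'s actions $N_i$ is constant while $N_{\min}$ is non-decreasing; your case analysis at action epochs is correct but does the same work in a slightly longer way (and should, strictly speaking, allow for several players acting at the same instant).
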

\begin{proof}
By Theorem \ref{thrm_async_FP}, it is sufficient to show that $\lim_{t\rightarrow\infty} N_i(t)=\infty$ for some (and hence all) $i$, and that $\lim_{t\rightarrow\infty}\frac{N_i(t)}{N_j(t)}=1$.

Note that for $i^*\in \arg\max_{i} w_{i,0}$, there holds $N_{i^*}(t) = \lfloor\frac{t}{w_{i^*,0}} \rfloor + 1$, and hence $\lim_{t\rightarrow\infty}N_{i^*}(t) = \infty$. Furthermore, by construction, $|N_i(t) - N_{i^*}(t)| \leq 2\max_{i} B_i$ for all $i$ and for all $t\geq 0$. Hence, $\lim_{t\rightarrow\infty}\frac{N_i(t)}{N_j(t)}=1$, for all $i,j$.
\end{proof}

\section{Concluding Remarks} \label{sec_conclusion}
We have studied the robustness of a class of best-response based algorithms that we refer to as FP-type algorithms. It has been shown that the convergence of such algorithms can be retained under a form of best-response perturbation in which players are permitted to sometimes make errors in their best response action, so long as the degree of suboptimality asymptotically decays to zero. We have shown that this form of robustness can be used to develop practical algorithms, including distributed algorithms, reduced-complexity algorithms, and asynchronous algorithms.

\appendix

\section*{Appendix}
{\small
\begin{lemma}
Let $i,j\in N$, let $\tau_i(s)$ and $\tilde q_j(s)$ be defined as in Section
\ref{sec_async_FP}, and assume A. \ref{a_synchrony} holds. Then $\lim_{s\rightarrow\infty} \|  q_j(\tau_i(s)) - \tilde q_j(s)\| = 0.$
\label{IR1}
\end{lemma}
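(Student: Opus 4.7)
The plan is to express both $q_j(\tau_i(s))$ and $\tilde q_j(s)$ as Cesàro-type averages over a single underlying sequence of player $j$'s actions, and then exploit A.\ref{a_synchrony} to conclude that they are averages taken over nearly the same number of terms.

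As a first step I would rewrite $q_j(\tau_i(s))$ in terms of the re-indexed ``tilde'' process. By the definition given in Section \ref{sec_async_RP},
\begin{equation*}
q_j(\tau_i(s)) \;=\; \frac{1}{N_j(\tau_i(s))}\sum_{k=1}^{\tau_i(s)} \sigma_j(k)\,X_j(k).
\end{equation*}
The nonzero terms correspond precisely to the rounds in which player $j$ was active, which by the definition of $\tilde\sigma_j$ can be re-indexed so that the $r$-th such term equals $\tilde\sigma_j(r)$. Writing $M_s := N_j(\tau_i(s))$ this yields $q_j(\tau_i(s)) = M_s^{-1}\sum_{r=1}^{M_s}\tilde\sigma_j(r)$. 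Likewise, using \eqref{ell_tau_eq}, $\tilde q_j(s) = q_j(\tau_j(s)) = s^{-1}\sum_{r=1}^{s}\tilde\sigma_j(r)$.

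Next I would verify $M_s/s \to 1$. From \eqref{ell_tau_eq}, $N_i(\tau_i(s))=s$, so $M_s/s = N_j(\tau_i(s))/N_i(\tau_i(s))$. Since A.\ref{a_synchrony}(i) forces $\tau_i(s)\to\infty$, A.\ref{a_synchrony}(ii) applied along the subsequence $n=\tau_i(s)$ gives $M_s/s\to 1$.

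The final step is a direct Cesàro-style estimate. Assume without loss of generality that $M_s\geq s$ (the opposite case is symmetric). Setting $A_s:=\sum_{r=1}^{s}\tilde\sigma_j(r)$ and $B_s:=\sum_{r=s+1}^{M_s}\tilde\sigma_j(r)$, I would write
\begin{equation*}
\left\|\frac{A_s+B_s}{M_s}-\frac{A_s}{s}\right\|
= \left\|\frac{sB_s-(M_s-s)A_s}{sM_s}\right\|
\leq \frac{\|B_s\|}{M_s}+\frac{(M_s-s)\|A_s\|}{sM_s}.
\end{equation*}
Because $\tilde\sigma_j(r)\in\Delta(Y_j)$ lies in a compact simplex, $\|\tilde\sigma_j(r)\|\leq C$ uniformly in $r$, so $\|A_s\|\leq Cs$ and $\|B_s\|\leq C(M_s-s)$. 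Substituting yields an upper bound of $2C(M_s-s)/M_s$, which tends to $0$ by the previous step. Hence $\|q_j(\tau_i(s))-\tilde q_j(s)\|\to 0$, as required.

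The only point requiring any care is the re-indexing of the weighted sum via the $X_j$-indicators into the $\tilde\sigma_j$ sequence; once that translation is in place the result reduces to the elementary observation that two Cesàro averages of a bounded sequence, taken over windows whose lengths are asymptotically proportional, must have the same limit behavior.
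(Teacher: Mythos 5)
Your proposal is correct and follows essentially the same route as the paper: both rest on the identity $q_j(n)=\tilde q_j(N_j(n))$, the fact that $N_i(\tau_i(s))=s$, and A.~\ref{a_synchrony}(ii) applied along the subsequence $n=\tau_i(s)$ to conclude $N_j(\tau_i(s))/s\to 1$. The only (cosmetic) difference is that the paper bounds $\|\tilde q_j(s_1)-\tilde q_j(s_2)\|$ by telescoping the increment bound $\|\tilde q_j(s+1)-\tilde q_j(s)\|\leq \sqrt{2}/s$, whereas you split the two Ces\`aro sums directly; both yield a bound of order $|N_j(\tau_i(s))-s|/\min(N_j(\tau_i(s)),s)$.
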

\begin{proof}
Note that by the definitions of $\tau_j$, $N_j$, and $\tilde q_j$ there holds $q_j(n) = q_j(\tau_j(N_j(n))) =  \tilde q_j(N_j(n)),$ for any $n\in \mathbb{N}_+$
Noting that $\sqrt{2}=\max_{p',p''\in \Delta(Y_j)} \|p'-p''\|$, we also have $\|\tilde q_j(s+1) - \tilde q_j(s)\| \leq \frac{\sqrt{2}}{s}$, for $s\in \mathbb{N}_{+}$, and more generally, for $s_1,s_2 \in \mathbb{N}_+$, we have
$ \|\tilde q_j(s_1) - \tilde q_j(s_2)\| \leq \sum_{s=\min(s_1,s2)}^{\max(s_1,s_2)-1} \|\tilde q_j(s+1) - \tilde q_j(s)\| \leq \frac{|s_2 - s_1|}{\min(s_1,s_2)}\sqrt{2}.$
Hence, $|q_j(\tau_i(s)) - \tilde q_j(s)\| = \|\tilde q_j(N_j(\tau_i(s))) - \tilde q_j(s)\| = \|\tilde q_j(N_j(\tau_i(s))) - \tilde q_j(N_i(\tau_i(s)))\| \leq \frac{\vert N_j(\tau_i(s)) - N_i(\tau_i(s))\vert}{\min(N_i(\tau_i(s)),N_j(\tau_i(s)))}\sqrt{2}, $
where the second equality follows from the fact that $N_i(\tau_i(s)) = s$ (see \eqref{ell_tau_eq}). Thus, it suffices to show that $\lim\limits_{s\rightarrow\infty} \frac{\vert N_j(\tau_i(s)) - N_i(\tau_i(s))\vert }{\min(N_i(\tau_i(s)),N_j(\tau_i(s)))} = 0.$
But, by A. \ref{a_synchrony}, for any $i,j$ there holds: $0=\lim_{n\rightarrow\infty}\frac{N_i(n)}{N_j(n)}-1=\lim_{s\rightarrow\infty} \frac{N_i(\tau_i(s))}{N_j(\tau_i(s))}-1 = \lim_{s\rightarrow\infty}\frac{N_i(\tau_i(s)) - N_j(\tau_i(s))}{N_j(\tau_i(s))} $, where the second equality follows from the fact that
(again by A. \ref{a_synchrony}) $\lim_{s\rightarrow\infty}\tau_i(s) = \infty$.
\end{proof}
}

\bibliographystyle{unsrt}
\bibliography{myRefs}

\end{document}